\documentclass[reqno,10pt]{amsart}

\usepackage{amsmath}
\usepackage{amssymb}
\usepackage{amsthm}
\usepackage{mathrsfs}
\usepackage{geometry}
\usepackage{graphicx}
\usepackage{color}
\usepackage{graphics}
\usepackage[latin1]{inputenc}
\usepackage{verbatim}
\usepackage{tikz}
\usepackage{multirow}
\usepackage{expdlist}

\newcommand{\id}{\mathrm{id}}

\newcommand{\floor}[1]{\lfloor {#1} \rfloor}

\newcommand{\wn}{\mathcal{W}_n}
\newcommand{\br}{(k_1,\ldots,k_n)}

\newcommand{\rk}{\mathrm{rk}}
\newcommand{\block}[4]{\left(\begin{tabular}{c|c} #1 & #2 \\ \hline
#3 & #4 \end{tabular} \right)}

\newcommand{\sts}{\large} 
\newcommand{\SC}{\operatorname{sc}}
\newcommand{\dd}{\operatorname{dd}}

\newcommand{\trap}[1]{%
\begin{tikzpicture}[scale=#1]%
\draw (0,0) -- (3,0);%
\draw (0,0) -- (1,1);%
\draw (1,1) -- (2,1);%
\draw (2,1) -- (3,0);%
\end{tikzpicture}%
}
\newcommand{\strap}{\trap{0.2}}

\numberwithin{equation}{section}

\newtheorem{theorem}{Theorem}
\newtheorem{lemma}{Lemma}

\newtheorem{corollary}{Corollary}

\newtheorem{conjecture}{Conjecture}

\begin{document}

\author{Ilse Fischer \and Lukas Riegler}
\title[Combinatorial Reciprocity for Monotone Triangles]{Combinatorial
Reciprocity for Monotone Triangles} \thanks{Supported by the Austrian Science Foundation FWF, START
grant Y463 and NFN grant S9607-N13.}
\thanks{Fakultät für Mathematik, Universität Wien, Nordbergstraße 15, A-1090
Wien, Austria}
\thanks{E-Mail: ilse.fischer@univie.ac.at, lukas.riegler@univie.ac.at}

\begin{abstract}
The number of Monotone Triangles with bottom row $k_1 < k_2 < \cdots < k_n$ is
given by a polynomial $\alpha(n; k_1,\ldots,k_n)$ in $n$ variables. The
evaluation of this polynomial at weakly decreasing sequences $k_1 \geq k_2
\geq \cdots \geq k_n$ turns out to be interpretable as signed enumeration of new
combinatorial objects called Decreasing Monotone Triangles. There exist
surprising connections between the two classes of objects -- in particular it is
shown that $\alpha(n;1,2,\ldots,n) = \alpha(2n; n,n,n-1,n-1,\ldots,1,1)$. In
perfect analogy to the correspondence between Monotone Triangles and Alternating
Sign Matrices, the set of Decreasing Monotone Triangles with bottom row
$(n,n,n-1,n-1,\ldots,1,1)$ is in one-to-one correspondence with a certain set of
ASM-like matrices, which also play an important role in proving the claimed identity
algebraically. Finding a bijective proof remains an open problem.
\end{abstract}

\maketitle

\section{\sts Introduction}
A \emph{Monotone Triangle} of size $n$ is a triangular array of integers
$(a_{i,j})_{1 \leq j \leq i \leq n}$ 
\[
\begin{array}{ccccccc}
&&& a_{1,1} \\
&& a_{2,1} && a_{2,2} \\
& \rotatebox{75}{$\ddots$} &&&& \ddots \\
a_{n,1} &&\cdots&  &\cdots&& a_{n,n}
\end{array}
\]
with strict increase along rows and weak increase along North-East- and
South-East-diagonals. An example of a Monotone Triangle of size $5$ is
given in Figure \ref{mt-example}.
\begin{figure}[ht]
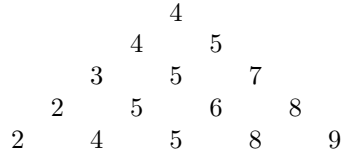

\begin{center}
$
\begin{array}{ccccccccc}
&&&& 4 \\
&&& 4 && 5  \\
&& 3 && 5 && 7 \\
& 2 && 5 && 6 && 8 \\
2 && 4 && 5 && 8 && 9 \\
\end{array}
$
\end{center}
\caption{One of the $16939$ Monotone Triangles with bottom row
$(2,4,5,8,9)$. \label{mt-example}}
\end{figure}
In \cite{FischerNumberOfMT} it has been shown that for each $n \geq 1$, there exists
a polynomial $\alpha(n;k_1,k_2,\ldots,k_n)$ of degree $n-1$ in each of the $n$
variables, such that the evaluation of this polynomial at strictly increasing
sequences $k_1 < k_2 < \cdots < k_n$ yields the number of Monotone Triangles
with prescribed bottom row $\br$ -- for example $\alpha(5;2,4,5,8,9) = 16939$. We now
give an interpretation to the evaluations at weakly decreasing sequences $k_1
\geq k_2 \geq \cdots \geq k_n$:

A \emph{Decreasing Monotone Triangle} (DMT) of size $n$ is a triangular array of
integers $(a_{i,j})_{1 \leq j \leq i \leq n}$ having the
following properties: 
\begin{enumerate}
\item The entries along North-East- and South-East-diagonals are weakly
decreasing.
\item Each integer appears at most twice in a row.
\item Two consecutive rows do not contain the same integer exactly once.
\end{enumerate}

\noindent Note that the weak decrease along diagonals implies a weak decrease
along rows. Let $\wn(k_1,\ldots,k_n)$ denote the set of DMTs with $n$ rows and prescribed bottom row
$(k_1,\ldots,k_n)$. As an example, if the bottom row is $(6,3,3,2,1)$, the
right-most entry of the penultimate row has to be $2$, thus its left neighbour has to
be $2$ too. The second entry has to be $3$ and the first entry may be $5,4$ or $3$. In
total $\mathcal{W}_5(6,3,3,2,1)$ consists of $5$ DMTs (see Figure \ref{dmts}).
\begin{figure}
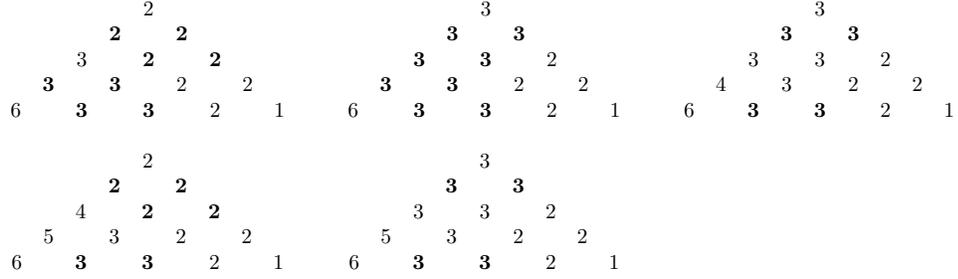

\begin{center}
\scalebox{0.8}{
$
\begin{array}{ccc}
\begin{array}{ccccccccc}
&&&& 2 \\
&&& \mathbf{2} && \mathbf{2}  \\
&& 3 && \mathbf{2} && \mathbf{2} \\
& \mathbf{3} && \mathbf{3} && 2 && 2 \\
6 && \mathbf{3} && \mathbf{3} && 2 && 1 \\
\end{array}
&\quad
\begin{array}{ccccccccc}
&&&& 3 \\
&&& \mathbf{3} && \mathbf{3}  \\
&& \mathbf{3} && \mathbf{3} && 2 \\
& \mathbf{3} && \mathbf{3} && 2 && 2 \\
6 && \mathbf{3} && \mathbf{3} && 2 && 1 \\
\end{array}
&\quad
\begin{array}{ccccccccc}
&&&& 3 \\
&&& \mathbf{3} && \mathbf{3}  \\
&& 3 && 3 && 2 \\
& 4 && 3 && 2 && 2 \\
6 && \mathbf{3} && \mathbf{3} && 2 && 1 \\
\end{array}
\\
\\
\begin{array}{ccccccccc}
&&&& 2 \\
&&& \mathbf{2} && \mathbf{2}  \\
&& 4 && \mathbf{2} && \mathbf{2} \\
& 5 && 3 && 2 && 2 \\
6 && \mathbf{3} && \mathbf{3} && 2 && 1 \\
\end{array}
&\quad
\begin{array}{ccccccccc}
&&&& 3 \\
&&& \mathbf{3} && \mathbf{3}  \\
&& 3 && 3 && 2 \\
& 5 && 3 && 2 && 2 \\
6 && \mathbf{3} && \mathbf{3} && 2 && 1 \\
\end{array}
\end{array}
$
}
\end{center}
\caption{The five Decreasing Monotone Triangles with bottom row
$(6,3,3,2,1)$.\label{dmts}}
\end{figure}
A pair of adjacent identical entries in a row is briefly called
\emph{pair}. A pair is called \emph{duplicate-descendant} (DD), if it is in the
last row, or if the row below contains the same pair. The duplicate-descendants of the DMTs in Figure
\ref{dmts} are marked in boldface.

In Section \ref{sectionDMT} we prove Theorem \ref{mainTheorem}, which states
that the evaluation of $\alpha(n;k_1,\ldots,k_n)$ at weakly decreasing
sequences $k_1 \geq k_2 \geq \cdots \geq k_n$ is a signed enumeration of DMTs
with bottom row $(k_1,k_2,\ldots,k_n)$.
\begin{theorem}
\label{mainTheorem}
Let $n \geq 1$ and $k_1 \geq k_2 \geq \cdots \geq k_n$ be a sequence of weakly
decreasing integers. Then
\begin{equation}
\label{theoremEq}
\alpha(n; k_1,\ldots,k_n) = (-1)^{\binom{n}{2}} \sum_{A \in \wn(k_1,\ldots,k_n)}
(-1)^{\dd(A)},
\end{equation}
where $\dd(A)$ denotes the
total number of duplicate-descendants of $A$.
\end{theorem}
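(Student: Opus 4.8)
The plan is to induct on $n$. The base case $n=1$ is immediate: $\wn(k_1)$ consists of the single one-entry array, which contains no pair and hence no duplicate-descendant, so the right-hand side of \eqref{theoremEq} equals $(-1)^{\binom{1}{2}}(-1)^{0}=1=\alpha(1;k_1)$. For the inductive step I would begin from the summation-operator recursion for $\alpha$ established in \cite{FischerNumberOfMT}, which expresses $\alpha(n;k_1,\ldots,k_n)$ as a fixed operator applied to $\alpha(n-1;l_1,\ldots,l_{n-1})$: a product over $i$ of indefinite sums $\sum_{l_i=k_i}^{k_{i+1}}$, together with the correction terms that enforce the strict increase of the penultimate row of a Monotone Triangle. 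The decisive point is that this is a \emph{polynomial} identity in the $k_i$, so it remains valid when evaluated at a weakly decreasing sequence $k_1\ge\cdots\ge k_n$, even though the underlying summation ranges are then empty or reversed.

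The engine of the argument is the polynomial reciprocity of indefinite summation. If $F$ is an antidifference of a polynomial $f$, the polynomial $\Sigma_a^b f:=F(b+1)-F(a)$ agrees with $\sum_{l=a}^{b}f(l)$ when $a\le b+1$, while $\Sigma_a^b f=-\sum_{l=b+1}^{a-1}f(l)$ when $b<a-1$, with the boundary values $\Sigma_a^{a-1}f=0$ and $\Sigma_a^{a}f=f(a)$. Applying this to each of the $n-1$ summation variables at a weakly decreasing sequence turns every forward sum over $[k_i,k_{i+1}]$ into a signed backward sum, so that the admissible penultimate rows $(l_1,\ldots,l_{n-1})$ now satisfy the reversed interlacing $k_{i+1}\le l_i\le k_i$ with weak decrease along the row -- exactly the interlacing forced on the penultimate row of a DMT. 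The $n-1$ sign reversals contribute a global factor $(-1)^{n-1}$, which multiplies the inductive prefactor $(-1)^{\binom{n-1}{2}}$ to give precisely $(-1)^{\binom{n}{2}}$, in agreement with the claimed normalization.

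It then remains to match the combinatorics. After substituting the inductive hypothesis $\alpha(n-1;l_1,\ldots,l_{n-1})=(-1)^{\binom{n-1}{2}}\sum_{A'}(-1)^{\dd(A')}$ into the recursion, I would glue each $A'\in\wn[n-1](l_1,\ldots,l_{n-1})$ beneath the new bottom row $(k_1,\ldots,k_n)$ and track how $\dd$ transforms. Pairs in the interior rows of $A'$ retain their duplicate-descendant status; a pair in the bottom row of $A'$, which was automatically a duplicate-descendant there, becomes one in the enlarged array only when the matching pair occurs in the appended row $n$; and every pair in row $n$ is a duplicate-descendant. The boundary contributions of the reciprocal sums -- the terms $l_i=k_i$ and $l_i=k_{i+1}$ that create pairs, together with the strictness corrections in Fischer's operator -- are precisely what must supply conditions (2) and (3) in the definition of a DMT and account for these changes of $\dd$. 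The main obstacle, and the technical heart of the proof, is exactly this bookkeeping: showing that the open-interval reciprocal sums, once their endpoint and coincidence terms are reinstated, reassemble into the closed-interval DMT count with the correct $(-1)^{\dd}$ weighting, including the sign flips caused by the change of duplicate-descendant status of the penultimate-row pairs. I expect this to demand a careful case analysis organized by the pattern of equalities $k_i=k_{i+1}$ in the bottom row and $l_i=l_{i+1}$ in the penultimate row.
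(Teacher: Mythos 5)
Your high-level route is in fact the paper's own --- induction on $n$ through the polynomial recursion (\ref{alphaRec}), with the extended definition (\ref{sumExtDef}) converting forward sums into signed backward sums at weakly decreasing arguments --- but the sign claim at the core of your second paragraph is false, and it is not a repairable detail. The per-step factor is \emph{not} uniformly $(-1)^{n-1}$: terms in which some $l_i$ hits an endpoint or forms a pair enter without the corresponding reversal. Concretely, for $n=2$ and $k_1=k_2$ one has $\sum_{(l_1)}^{(k_1,k_1)} f(l_1)=\sum_{l_1=k_1}^{k_1} f(l_1)=+f(k_1)$, so $\alpha(2;k_1,k_1)=+\alpha(1;k_1)$, whereas your uniform factor would predict $-\alpha(1;k_1)$; in this example the theorem's prefactor $(-1)^{\binom{2}{2}}$ is compensated not by the summation operator but by the bottom-row pair counted in $\dd$. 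What the operator actually contributes per step is $(-1)^{\SC(\mathbf{k};\mathbf{l})}$, where $\SC$ counts pairs plus newcomers in the penultimate row (the paper's Lemma \ref{sumOpDMT}); since the $n-1$ entries of that row split into newcomers, peaks and pair-members, this equals $(-1)^{n-1}$ only up to the non-trivial factor $(-1)^{\#\mathrm{peaks}+\#\mathrm{pairs}}$. Converting the accumulated statistic $\SC(A)$ into $\binom{n}{2}+\dd(A)$ then requires a genuinely separate argument, resting on the one-to-one correspondence between peaks and base-pairs (the paper's Lemma \ref{statConnLemma}), which nothing in your sketch supplies. Relatedly, the support of the reciprocal sums is not simply ``all weakly decreasing interlacing rows'': condition $(3')$ forces, for instance, $l_{n-2}=k_{n-1}$ whenever $l_{n-1}=k_{n-1}>k_n$, and verifying that the endpoint and coincidence terms of the operator carve out exactly $\mathcal{P}(k_1,\ldots,k_n)$ with the correct signs is the case analysis you explicitly defer. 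That deferred ``bookkeeping'' is not a residue of the proof; it \emph{is} the proof.

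A second genuine omission: you never treat bottom rows in which some value occurs three or more times, which the theorem's hypothesis allows. There $\wn(k_1,\ldots,k_n)=\emptyset$, so the claim is that the polynomial $\alpha(n;k_1,\ldots,k_n)$ vanishes, and your gluing induction gives no mechanism for this --- the operator applied to $\alpha(n-1;\cdot)$ does not visibly vanish, and there are no DMTs to glue. The paper devotes its first lemma to exactly this, showing by induction (via the alternative recursions (\ref{sumOpRec}) and (\ref{sumOpRec2})) that the summation operator propagates the property of vanishing on three consecutive equal arguments; only with that in hand can the main induction be confined to sequences in which each value appears at most twice, which is also the standing hypothesis under which Lemma \ref{sumOpDMT} is proved and which your inductive step would need in order to identify the surviving terms with penultimate rows of DMTs.
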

In our example there are four DMTs with an even number of
duplicate-descendants and one with an odd number. This is consistent with
Theorem \ref{mainTheorem} since \mbox{$\alpha(5;6,3,3,2,1) = 3$}. \\

Of special interest are Monotone Triangles with bottom row
$(1,2,\ldots,n)$. This stems from the fact that they are in one-to-one
correspondence with \emph{Alternating Sign Matrices} (ASMs) of size $n$, i.e. $n
\times n$-matrices with entries in $\{0,1,-1\}$, where in each row and column the
non-zero entries alternate in sign and sum up to $1$. As we will see in in
Section \ref{section2ASM}, DMTs with bottom row $(n,n,n-1,n-1,\ldots,1,1)$ are
in correspondence with the following set of matrices: A \emph{$2$-ASM} of size $n$ is a $(2n)\times n$-matrix with
entries in $\{0,1,-1\}$, where in each row the non-zero entries alternate in
sign and sum up to $1$, and in each column the non-zero entries occur in pairs,
such that each partial column sum is in $\{0,1,2\}$ and each column sums up to
$2$. An example of corresponding objects can be seen in Figure \ref{2asm-example}.
\begin{figure}
\begin{center}
\scalebox{0.9}{
$
\begin{array}{ccc}
\left(
\begin{array}{ccccc}
0 & 0 & 1 & 0 & 0 \\
0 & 0 & 1 & 0 & 0 \\
0 & 0 & 0 & 1 & 0 \\
0 & 1 & -1 & 1 & 0 \\
1 & -1 & 1 & 0 & 0 \\
1 & 0 & -1 & 0 & 1 \\
0 & 1 & -1 & 0 & 1 \\
0 & 1 & 0 & 0 & 0 \\
0 & 0 & 1 & 0 & 0 \\
0 & 0 & 1 & 0 & 0 \\
\end{array}
\right)
&
\Leftrightarrow
&
\begin{array}{cccccccccccccccccccc}
&&&&&&&&& 3 \\
&&&&&&&& 3 && 3 \\
&&&&&&& 4 && 3 && 3 \\
&&&&&& 4 && 4 && 3 && 2 \\
&&&&& 4 && 4 && 3 && 3 && 1 \\
&&&& 5 && 4 && 4 && 3 && 1 && 1 \\
&&& 5 && 5 && 4 && 4 && 2 && 1 && 1 \\
&& 5 && 5 && 4 && 4 && 2 && 2 && 1 && 1 \\
& 5 && 5 && 4 && 4 && 3 && 2 && 2 && 1 && 1 \\
5 && 5 && 4 && 4 && 3 && 3 && 2 && 2 && 1 && 1
\end{array}
\end{array}
$
}
\end{center}
\caption{A $2$-ASM of size $5$ and the corresponding DMT.\label{2asm-example}}
\end{figure}
The reason for the interest in DMTs with bottom row $(n,n,n-1,n-1,\ldots,1,1)$
originates from computational experiments indicating that -- surprisingly --
\begin{equation}
\label{alphaConnEq}
\alpha(2n;n,n,n-1,n-1,\ldots,1,1) = \alpha(n;1,2,\ldots,n)
\end{equation}
seems to hold. In Section \ref{sectionConnection} we prove the following
stronger result:
\begin{theorem}
\label{mainTheorem2}
Let $A_{n,i}$ denote the refined ASM-numbers, i.e. the number of ASMs with the
first row's unique $1$ in column $i$. Then
\begin{equation}
\label{motivationEq}
\alpha(2n-1; n-1+i, n-1,n-1,\ldots,1,1) = (-1)^{n-1} A_{n,i}
\end{equation}
holds for $i = 1,\ldots,2n-1$, $n\geq 1$.
\end{theorem}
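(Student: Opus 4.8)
The plan is to fix $n$ and regard both sides of \eqref{motivationEq} as polynomials in the single variable $i$. Because $\alpha(2n-1;k_1,\ldots,k_{2n-1})$ has degree $2n-2$ in $k_1$, the function
\[
f(i):=\alpha(2n-1;\,n-1+i,\,n-1,n-1,\ldots,1,1)
\]
is a polynomial in $i$ of degree at most $2n-2$. On the other side, the refined ASM-number is
\[
A_{n,i}=\binom{n+i-2}{i-1}\,\frac{(2n-1-i)!}{(n-i)!}\,\prod_{j=0}^{n-2}\frac{(3j+1)!}{(n+j)!},
\]
and writing the two factorial quotients as products of $n-1$ linear factors each exhibits $A_{n,i}$ as a polynomial in $i$ of degree exactly $2n-2$ whose zeros are $i\in\{n+1,\ldots,2n-1\}$ and $i\in\{2-n,\ldots,0\}$. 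Since two polynomials of degree at most $2n-2$ coincide as soon as they agree at $2n-1$ points, it suffices to produce these $2n-2$ zeros for $f$ and to match one further value.

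First I would feed the evaluation into Theorem \ref{mainTheorem}. The argument $(n-1+i,n-1,\ldots,1,1)$ is weakly decreasing for every $i\ge 0$, and $\binom{2n-1}{2}=(2n-1)(n-1)$ has the parity of $n-1$, so Theorem \ref{mainTheorem} gives
\[
f(i)=(-1)^{n-1}\!\!\sum_{A\in\mathcal{W}_{2n-1}(n-1+i,n-1,\ldots,1,1)}\!\!(-1)^{\dd(A)}\qquad(i\ge 0).
\]
At $i=0$ the first three bottom entries all equal $n-1$, so condition (2) in the definition of a DMT is violated, the index set is empty, and $f(0)=0$. For $i=n+1,\ldots,2n-1$ the sequence is still weakly decreasing, and here I would prove that the signed sum cancels to $0$; this, together with the \emph{reflection symmetry} $f(i)=f(n+1-i)$ (to be established intrinsically, mirroring the classical $A_{n,i}=A_{n,n+1-i}$ and transporting the zeros $\{n+1,\ldots,2n-1\}$ onto $\{2-n,\ldots,0\}$), supplies all $2n-2$ zeros. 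A single normalization—matching, say, the top-degree coefficient in $i$, or the value at one accessible point—then fixes the constant and yields $f(i)=(-1)^{n-1}A_{n,i}$; this is the promised refinement of \eqref{alphaConnEq}.

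The crux is the vanishing of the signed DMT-count for $n+1\le i\le 2n-1$ together with the reflection symmetry, precisely the data that Theorem \ref{mainTheorem} does not hand over by inspection. Already for $n=2$ the set $\mathcal{W}_3(4,1,1)$ consists of the triangle with apex $1$, second row $(1,1)$ and bottom row $(4,1,1)$, carrying $\dd=2$, and the triangle with apex $2$, second row $(3,1)$ and bottom row $(4,1,1)$, carrying $\dd=1$, so the signed sum is $1-1=0$ by a cancellation for which no parity-reversing bijection is apparent; this is presumably why a bijective proof remains open. I therefore expect the real work to lie in the $2$-ASM model of Section \ref{section2ASM}: translating $\sum_A(-1)^{\dd(A)}$ into a signed enumeration of $2$-ASM-like matrices and evaluating it—most plausibly as a determinant, or as a degenerate specialization of the operator formula for $\alpha$ from \cite{FischerNumberOfMT}, where the repeated staircase entries make the underlying Vandermonde product vanish and the shift operators must be resolved symbolically before the specialization is carried out. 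The same degenerate specialization is the natural route to the nonzero values for $1\le i\le n$, and an alternative to the symmetry step is to establish the contiguous relation $i(2n-1-i)\,f(i+1)=(n-i)(n+i-1)\,f(i)$ directly, after which a single value of $f$ determines everything.
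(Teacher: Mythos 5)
Your scaffold is sound as a reduction: $f(i):=\alpha(2n-1;n-1+i,n-1,n-1,\ldots,1,1)$ is a polynomial of degree at most $2n-2$ in $i$, while $(-1)^{n-1}A_{n,i}$ extends to a polynomial of degree exactly $2n-2$ with zero set $\{2-n,\ldots,0\}\cup\{n+1,\ldots,2n-1\}$, so $2n-2$ zeros plus one matched value would indeed finish the proof; your $n=2$ computation of $\mathcal{W}_3(4,1,1)$ is also correct. Moreover, one ingredient is cheaper than you suggest: the reflection symmetry $f(i)=f(n+1-i)$ follows in three lines from the three properties of $\alpha$ recorded in the paper's proof of Lemma \ref{wniLES} (the cyclic property from \cite{FischerNewRefProof} --- with $2n-1$ odd the sign is $+1$ --- plus reversal-negation and translation invariance): $f(i)=\alpha(2n-1;n-1,n-1,\ldots,1,1,i-n)=\alpha(2n-1;n-i,-1,-1,\ldots,-(n-1),-(n-1))=\alpha(2n-1;2n-i,n-1,n-1,\ldots,1,1)=f(n+1-i)$. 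But the two steps that carry the entire weight of the theorem are missing, as you yourself concede. Only $f(n+1)=f(0)=0$ comes for free (three equal entries, so $\mathcal{W}$ is empty; equivalently the first lemma of Section \ref{sectionDMT}); the remaining $n-2$ vanishings $f(i)=0$ for $i=n+2,\ldots,2n-1$ are exactly the cancellation your example exhibits, and you offer no mechanism for them --- the determinant evaluation, the ``degenerate specialization'' of the operator formula, and the contiguous relation $i(2n-1-i)f(i+1)=(n-i)(n+i-1)f(i)$ (consistent with the closed form, but nowhere derived) are named as hopes, not arguments. The normalization is likewise not a formality: once zeros and symmetry are in place, matching ``one accessible value'' means computing $f(1)=\alpha(2n-1;n,n-1,n-1,\ldots,1,1)$, which by the theorem equals $(-1)^{n-1}A_{n,1}=(-1)^{n-1}A_{n-1}$, the total number of ASMs of size $n-1$; the single remaining constant thus secretly contains ASM enumeration and cannot be read off.

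For comparison, the paper proves the equivalent Theorem \ref{wniTheorem} for the numbers $W_{n,i}$ (the $(i-1)$-st forward differences of $f$ at $i=1$), assembling exactly the data you lack, in different coordinates: a combinatorial symmetry $W_{n,i}=(-1)^{n-1}W_{n,2n-i}$ via reflection of $2$-ASM-like matrices (Lemma \ref{wniSymmetry}); a linear system showing $(W_{n,i})_i$ is a $1$-eigenvector of a binomial matrix, derived from the same three $\alpha$-properties (Lemma \ref{wniLES}); one-dimensionality of that eigenspace via a determinant that reduces to Behrend's descending-plane-partition determinant, $\det(S')=(-1)^{n-1}A_{n-1}$ (Lemma \ref{eigenspaceDimLemma}) --- this is where the difficulty of your hard vanishings resurfaces; Chu--Vandermonde verifications that the candidate numbers lie in the eigenspace (Lemma \ref{conjNumbersEVLemma}); and the constant pinned not by a single evaluation but by induction on $n$ through $W_{n,1}=-\sum_{i=1}^{n-1}\binom{n-1}{i}W_{n-1,i}$ (Lemma \ref{wniRecursionLemma}). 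Your zeros-plus-symmetry-plus-constant framing is a legitimate alternative to the paper's eigenvector-plus-dimension-plus-constant framing, but as submitted it establishes only the symmetry and one easy zero; the $n-2$ hard vanishings and the inductive determination of the constant are the theorem, and they are absent.
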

Having a combinatorial interpretation in terms of DMTs for the left-hand side
of (\ref{alphaConnEq}) and one involving
Monotone Triangles for the right-hand side, the equality demands for a
combinatorial explanation, i.e. a bijective proof. In Section
\ref{sectionOutlook} a first approach towards a bijective proof is given.

\section{\sts Decreasing Monotone Triangles}
\label{sectionDMT}

Removing the bottom row of a Monotone Triangle with $n$ rows yields a Monotone
Triangle with $n-1$ rows. If we want to count the number of Monotone Triangles
with bottom row $k_1 < k_2 < \cdots < k_n$, we can thus determine all admissible
penultimate rows $l_1 < l_2 < \cdots < l_{n-1}$ and count the number of Monotone
Triangles with these bottom rows. It follows that
\begin{equation}
\alpha(n;k_1,\ldots,k_n)= \sum_{\substack{(l_1,\ldots,l_{n-1})\in
\mathbb{Z}^{n-1}, \\
k_1 \leq l_1 \leq k_2 \leq l_2 \leq \cdots \leq k_{n-1} \leq l_{n-1} \leq k_n,
\\ l_i < l_{i+1}}} \alpha(n-1; l_1,\ldots,l_{n-1})
\end{equation}
holds for all $k_1 < k_2 < \cdots < k_n$, $k_i \in \mathbb{Z}$.

This motivates the following inductive definition of a summation operator
$\sum\limits_{(l_1,\ldots,l_{n-1})}^{(k_1,\ldots,k_n)}$ for arbitrary
$(k_1,\ldots,k_n) \in \mathbb{Z}^n$:
\begin{align}
\label{sumOpRec}
\sum_{(l_1,\ldots,l_{n-1})}^{\br} A(l_1,\ldots,l_{n-1}) := 
&\sum_{(l_1,\ldots,l_{n-2})}^{(k_1,\ldots,k_{n-1})}
\sum_{l_{n-1} = k_{n-1}+1}^{k_n} A(l_1,\ldots,l_{n-2},l_{n-1}) \\\notag &+
\sum_{(l_1,\ldots,l_{n-2})}^{(k_1,\ldots,k_{n-2},k_{n-1}-1)}
A(l_1,\ldots,l_{n-2},k_{n-1}), \quad n \geq 2,
\end{align}
with $\sum\limits_{()}^{(k_1)}:=\id$ and the extended definition of ordinary
sums
\begin{equation}
\label{sumExtDef}
\sum_{i=a}^b f(i) := \begin{cases} 0, & \quad b=a-1, \\ -\sum\limits_{i =
b+1}^{a-1} f(i), & \quad b+1 \leq a-1. \end{cases}
\end{equation}
In particular, it follows that
\[
\sum_{(l_1)}^{(k_1,k_2)} f(l_1) = \sum_{()}^{(k_1)} \sum_{l_1=k_1+1}^{k_2}
f(l_1) + \sum_{()}^{(k_1-1)} f(k_1) = \sum_{l_1=k_1}^{k_2} f(l_1), \quad \forall
k_1,k_2 \in \mathbb{Z}.
\]
Note that in the case of strictly
increasing sequences $k_1 < k_2 < \cdots < k_n$ we have that
\[
\sum_{(l_1,\ldots,l_{n-1})}^{\br} = \sum_{\substack{(l_1,\ldots,l_{n-1})\in
\mathbb{Z}^{n-1}, \\
k_1 \leq l_1 \leq k_2 \leq l_2 \leq \cdots \leq k_{n-1} \leq l_{n-1} \leq k_n,
\\ l_i < l_{i+1}}}.
\]
An advantage of the summation operator is that we can now give a recursive
description of $\alpha(n;k_1,\ldots,k_n)$ for arbitrary $(k_1,\ldots,k_n) \in
\mathbb{Z}^n$:
\begin{equation}
\label{alphaRec}
\alpha(n;k_1,\ldots,k_n) = \sum_{(l_1,\ldots,l_{n-1})}^{\br} \alpha(n-1;l_1,\ldots,l_{n-1}).
\end{equation}
The correctness of (\ref{alphaRec}) relies on the extended definition of
summation in (\ref{sumExtDef}). To see this, first note that for every
polynomial $p(x)$ there exists a polynomial $q(x)$ such that $q(x+1)-q(x) =
p(x)$. It follows that
\[
\sum_{i=a}^b p(i) = q(b+1)-q(a), \quad a \leq b, \quad a,b \in \mathbb{Z}.
\]
The crucial observation is, that by using the extended definition
of summation this equality also holds for integers $a > b$. Use induction in
(\ref{sumOpRec}) to see that if $A(l_1,\ldots,l_{n-1})$ is a polynomial in each
$l_i$, then there exists a polynomial $B$ in $n$ variables satisfying
\[
B(k_1,\ldots,k_n) = \sum_{(l_1,\ldots,l_{n-1})}^{\br} A(l_1,\ldots,l_{n-1}),
\quad (k_1,\ldots,k_n) \in \mathbb{Z}^n.
\]
Firstly, this implies the existence of a polynomial, which is equal to the
right-hand side of (\ref{alphaRec}) for arbitrary $(k_1,\ldots,k_n) \in \mathbb{Z}^n$.
As (\ref{alphaRec}) holds for $k_1 < k_2 < \cdots < k_n$ and, furthermore, a
polynomial in $n$ variables is uniquely determined by the values on $\{(k_1,\ldots,k_n)\in \mathbb{Z}^n: k_1 < k_2 < \cdots < k_n\}$, it follows that (\ref{alphaRec})
holds for arbitrary $(k_1,\ldots,k_n) \in \mathbb{Z}^n$.
\noindent Secondly it tells us that instead of definition (\ref{sumOpRec}), we
can use any other inductive definition to get the same polynomial, as long as it is based on
the extended definiton of ordinary summation and coincides
with
\[
\sum_{\substack{(l_1,\ldots,l_{n-1})\in
\mathbb{Z}^{n-1}, \\
k_1 \leq l_1 \leq k_2 \leq l_2 \leq \cdots \leq k_{n-1} \leq l_{n-1} \leq k_n,
\\ l_i < l_{i+1}}}
\]
for all strictly increasing integer sequences $k_1 < k_2 < \cdots < k_n$. In
particular, we will also use
\begin{align}
\label{sumOpRec2}
\sum_{(l_1,\ldots,l_{n-1})}^{\br} A(l_1,\ldots,l_{n-1}) =
& \sum_{(l_1,\ldots,l_{n-2})}^{(k_1,\ldots,k_{n-1})} \sum_{l_{n-1} =
k_{n-1}}^{k_n} A(l_1,\ldots,l_{n-2},l_{n-1}) \\\notag &-
\sum_{(l_1,\ldots,l_{n-3})}^{(k_1,\ldots,k_{n-2})}
A(l_1,\ldots,l_{n-3},k_{n-1},k_{n-1}), \quad n \geq 3,
\end{align}
and
\begin{align}
\label{sumOpRec3}
\sum_{(l_1,\ldots,l_{n-1})}^{\br} A(l_1,\ldots,l_{n-1}) = 
& \sum_{(l_2,\ldots,l_{n-1})}^{(k_2,\ldots,k_{n})} \sum_{l_{1} =
k_1}^{k_2-1} A(l_1,l_2,\ldots,l_{n-1}) \\\notag &+
\sum_{(l_2,\ldots,l_{n-1})}^{(k_2+1,k_3,\ldots,k_n)}
A(k_2,l_2,\ldots,l_{n-1}), \quad n \geq 2.
\end{align}
Equipped with the summation operator, let us first consider the case that the
bottom row $\br$ is weakly decreasing and contains three identical
entries. By definition, $\wn(k_1,\ldots,k_n)$ is the empty set, and thus the
right-hand side of (\ref{theoremEq}) is zero. Therefore we have to show that the
polynomial $\alpha(n;k_1,\ldots,k_n)$ also vanishes in this case.
\begin{lemma}
Let $n \geq 3$ and $A(l_1,\ldots,l_{n-1})$ be a polynomial in each variable
satisfying
\[
A(l_1,\ldots,l_{i-1},l_i,l_i,l_i,l_{i+3},\ldots,l_{n-1}) = 0,
\quad i=1,\ldots,n-3.
\]
The polynomial $B(k_1,\ldots,k_n):=
\sum\limits_{(l_1,\ldots,l_{n-1})}^{(k_1,\ldots,k_n)} A(l_1,\ldots,l_{n-1})$
then satisfies
\[
B(k_1,\ldots,k_{i-1},k_i,k_i,k_i,k_{i+3},\ldots,k_n) = 0, \quad i=1,\ldots,n-2.
\]
In particular
\begin{equation}
\alpha(n; k_1,\ldots,k_{i-1},k_i,k_i,k_i,k_{i+3},\ldots,k_n) = 0, \quad
i=1,\ldots,n-2.
\end{equation}
\end{lemma}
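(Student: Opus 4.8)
The plan is to prove the statement about $B$ by induction on $n$ and to deduce the displayed vanishing of $\alpha$ at the end. For the base case $n=3$ the hypothesis on $A$ is vacuous, and the claim follows by a direct computation: unwinding the definition via (\ref{sumOpRec}) gives
\[
\sum_{(l_1,l_2)}^{(k,k,k)} A = \sum_{l_1=k}^{k}\sum_{l_2=k+1}^{k} A(l_1,l_2) + \sum_{l_1=k}^{k-1} A(l_1,k),
\]
and each of the two ordinary sums is empty, hence zero by (\ref{sumExtDef}). In particular the size-$3$ operator annihilates \emph{every} summand on a constant row, a fact I will reuse. For the inductive step ($n\ge 4$) I would expand $B$ with the recursion (\ref{sumOpRec2}), writing $B=T_1-T_2$ with $T_1=\sum_{(l_1,\ldots,l_{n-2})}^{(k_1,\ldots,k_{n-1})} g_1$ and $T_2=\sum_{(l_1,\ldots,l_{n-3})}^{(k_1,\ldots,k_{n-2})} g_2$, where $g_1(l_1,\ldots,l_{n-2})=\sum_{l_{n-1}=k_{n-1}}^{k_n}A(l_1,\ldots,l_{n-1})$ and $g_2(l_1,\ldots,l_{n-3})=A(l_1,\ldots,l_{n-3},k_{n-1},k_{n-1})$.

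The first thing to check is that the vanishing hypothesis descends: because the extra variable $l_{n-1}$ in $g_1$ and the two frozen trailing entries $k_{n-1}$ in $g_2$ sit at the far right, any triple among the \emph{remaining} arguments of $g_1$ or $g_2$ forces a triple in $A$ at a position $\le n-3$, so both $g_1$ and $g_2$ again vanish on consecutive triples, in exactly the index range needed to invoke the lemma one and two sizes down. I would then split on the position $i$ of the triple $k_i=k_{i+1}=k_{i+2}$. If $i\le n-3$, the triple survives in the shorter row $(k_1,\ldots,k_{n-1})$ at a position still $\le n-3$, so $T_1=0$ by the lemma for size $n-1$ (for $n=4$ this is precisely the base-case fact that the size-$3$ operator kills everything on a constant row). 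For $T_2$: if $i\le n-4$ the triple survives in $(k_1,\ldots,k_{n-2})$ and $T_2=0$ by the lemma for size $n-2$; if $i=n-3$ the triple only partially survives, so I peel one further variable off $T_2$ via (\ref{sumOpRec}), the summation part is empty and the lone boundary term has summand $A(l_1,\ldots,l_{n-4},k_{n-1},k_{n-1},k_{n-1})$, which vanishes by the hypothesis on $A$. Hence $B=0$ throughout this range.

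The genuine obstacle is the boundary case $i=n-2$, where the triple $k_{n-2}=k_{n-1}=k_n=:k$ lies flush against the right edge, exactly where the peeling recursions alter entries, and neither $T_1$ nor $T_2$ vanishes on its own. The plan is to show $T_1=T_2$. Collapsing the now one-term inner sum of $T_1$ and peeling its last variable once more via (\ref{sumOpRec}) reduces $T_1$ to $\sum_{(l_1,\ldots,l_{n-3})}^{(k_1,\ldots,k_{n-3},k-1)}A(l_1,\ldots,l_{n-3},k,k)$, while $T_2=\sum_{(l_1,\ldots,l_{n-3})}^{(k_1,\ldots,k_{n-3},k)}A(l_1,\ldots,l_{n-3},k,k)$: the very same size-$(n-2)$ operator applied to the very same summand, the two rows differing only in their last entry, $k-1$ versus $k$. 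Peeling that last entry from both via (\ref{sumOpRec}), the boundary terms are independent of this entry and cancel, while the difference of the two inner ordinary sums telescopes to a single extra term, so that $T_1-T_2=-\sum_{(l_1,\ldots,l_{n-4})}^{(k_1,\ldots,k_{n-3})}A(l_1,\ldots,l_{n-4},k,k,k)$; its summand is identically $0$ by the hypothesis on $A$, giving $B=0$.

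Finally the ``in particular'' is a short induction: taking $A=\alpha(n-1;\cdot)$, the inductive form of the lemma guarantees that $\alpha(n-1;\cdot)$ vanishes on consecutive triples at positions $1,\ldots,n-3$, which is exactly the required hypothesis, so (\ref{alphaRec}) together with the lemma yields the asserted vanishing of $\alpha(n;\cdot)$ at positions $1,\ldots,n-2$. I expect the main difficulties to be largely organizational, namely keeping the three index ranges consistent (the hypothesis on $A$, the descended hypotheses on $g_1$ and $g_2$, and the admissible triple positions at sizes $n-1$ and $n-2$), together with the one genuinely delicate step, the telescoping cancellation establishing $T_1=T_2$ in the edge case $i=n-2$.
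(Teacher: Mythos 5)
Your proposal is correct and takes essentially the same route as the paper: induction on $n$ using the recursions (\ref{sumOpRec}) and (\ref{sumOpRec2}), the same case split on the position of the triple, the same verification that the vanishing hypothesis descends to the inner summands, and the same final appeals to the hypothesis on $A$. The only divergence is organizational: in the boundary case $i=n-2$ the paper applies (\ref{sumOpRec}) first and then (\ref{sumOpRec2}), so the inner sums collapse immediately, whereas you apply (\ref{sumOpRec2}) first and recover the identical vanishing term $-\sum_{(l_1,\ldots,l_{n-4})}^{(k_1,\ldots,k_{n-3})} A(l_1,\ldots,l_{n-4},k,k,k)$ via your (valid) telescoping cancellation.
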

\begin{proof}
The case $n=3$ is easy to check using (\ref{sumOpRec}). There are three cases
to check for $n \geq 4$ depending on whether there are no, one or at least two
entries to the right of the three identical entries:
\begin{itemize}
    \item $i=n-2$: 
    Applying (\ref{sumOpRec}) to obtain the first equality, (\ref{sumOpRec2})
    for the second equality and the Lemma's
    assumption for the third, we see that
    \begin{multline*}
    B(k_1,\ldots,k_{n-3},k_{n-2},k_{n-2},k_{n-2}) 
    =\sum_{(l_1,\ldots,l_{n-2})}^{(k_1,\ldots,k_{n-3},k_{n-2},k_{n-2}-1)}
    A(l_1,\ldots,l_{n-2},k_{n-2}) \\
    = - \sum_{(l_1,\ldots,l_{n-4})}^{(k_1,\ldots,k_{n-3})}
    A(l_1,\ldots,l_{n-4},k_{n-2},k_{n-2},k_{n-2}) = 0.
    \end{multline*}
    \item $i=n-3$: Applying recursion (\ref{sumOpRec2}) yields
    \begin{multline*}
    B(k_1,\ldots,k_{n-4},k_{n-3},k_{n-3},k_{n-3},k_{n}) \\
    =\sum_{(l_1,\ldots,l_{n-2})}^{(k_1,\ldots,k_{n-4},k_{n-3},k_{n-3},k_{n-3})}
    \sum_{l_{n-1}=k_{n-3}}^{k_n} A(l_1,\ldots,l_{n-2},l_{n-1}) - \\
    -\sum_{(l_1,\ldots,l_{n-3})}^{(k_1,\ldots,k_{n-4},k_{n-3},k_{n-3})}
    A(l_1,\ldots,l_{n-3},k_{n-3},k_{n-3}). \\ 
    \end{multline*}
    Note that $A'(l_1,\ldots,l_{n-2}):= \sum\limits_{l_{n-1}=k_{n-3}}^{k_n}
    A(l_1,\ldots,l_{n-2},l_{n-1})$ satisfies the Lemma's hypothesis. By
    induction the first sum vanishes, and using (\ref{sumOpRec}) shows that
    the second sum vanishes too.
    \item $1 \leq i \leq n-4$: Applying (\ref{sumOpRec}) and the
    induction hypothesis as in the previous case implies that
    $B(k_1,\ldots,k_{i-1},k_i,k_i,k_i,k_{i+3},\ldots,k_n) = 0$.
   \end{itemize}
In particular, this shows that $\alpha(n;k_1,\ldots,k_n) = 0$, whenever
there are three consecutive identical entries among $(k_1,\ldots,k_n)$.
\end{proof}

We can now restrict ourselves to the case of weakly decreasing sequences
$k_1 \geq k_2 \geq \cdots \geq k_n$, which contain each integer at most
twice. First, note that condition $(3)$ of DMTs is equivalent to the following
condition:
\begin{itemize}
\item[$(3')$] If an entry is equal to its South-East-neighbour and smaller
than its South-West-neighbour, then the entry has a right neighbour
and is equal to it. If an entry is equal to its South-West-neighbour and
greater than its South-East-neighbour, then the entry has a left neighbour and is equal to
it.
\end{itemize}
\noindent In the following, an entry strictly smaller than the
South-West-neighbour and strictly larger than the South-East-neighbour will be called \emph{newcomer},
i.e. an entry in row $i$, $1 \leq i < n$, which is not appearing in row $i+1$.

The proof of Theorem \ref{mainTheorem} consists of the following parts:
In Lemma \ref{sumOpDMT} we show that applying the summation operator
$\sum\limits_{(l_1,\ldots,l_{n-1})}^{(k_1,\ldots,k_n)}$ is a signed summation
over all possible penultimate rows of DMTs with bottom row $(k_1,\ldots,k_n)$.
Applying this inductively we see in Corollary \ref{mainProofStepOne} that
$\alpha(n; k_1,\ldots,k_n)$ is the signed summation over all
DMTs with bottom row $(k_1,\ldots,k_n)$, where the sign is determined by the
total number of pairs and newcomers in the DMT without the bottom row. Finally, in
Lemma \ref{statConnLemma} we show that the parity of this statistic is equal
to the parity of the statistic claimed in Theorem \ref{mainTheorem}.

\begin{lemma}
\label{sumOpDMT}
Let $(k_1,\ldots,k_n)$ be a weakly decreasing sequence of integers with each
element appearing at most twice, and let
$\mathcal{P}(k_1,\ldots,k_n)$ denote the set of $(n-1)$-st rows of elements in
$\wn\br$. Then, for every polynomial $A(l_1,\ldots,l_{n-1})$ we have
\begin{equation}
\label{dmtSumOpEq}
\sum_{(l_1,\ldots,l_{n-1})}^{(k_1,\ldots,k_n)} A(l_1,\ldots,l_{n-1}) =
\sum_{(l_1,\ldots,l_{n-1}) \in \mathcal{P}(k_1,\ldots,k_n)}
(-1)^{\SC(\mathbf{k};\mathbf{l})} A(l_1,\ldots,l_{n-1}), \quad n \geq 2,
\end{equation}
where the sign-change function $\SC(\mathbf{k};\mathbf{l}) :=
\SC(k_1,\ldots,k_n;l_1,\ldots,l_{n-1})$ is the number of pairs in
$(l_1,\ldots,l_{n-1})$ plus the number of newcomers.
\end{lemma}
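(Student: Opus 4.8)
The plan is to prove (\ref{dmtSumOpEq}) by induction on $n$. For the base case $n=2$ I would evaluate the left-hand side directly: by the computation following (\ref{sumExtDef}) we have $\sum_{(l_1)}^{(k_1,k_2)} A = \sum_{l_1=k_1}^{k_2} A(l_1)$, and I would split into the cases $k_1=k_2$ and $k_1>k_2$. When $k_1=k_2$ the interlacing forces $l_1=k_1=k_2$, a valid single-entry penultimate row with no pair and no newcomer, so $\SC=0$ and both sides equal $A(k_1)$. When $k_1>k_2$ the extended definition (\ref{sumExtDef}) turns the sum into $-\sum_{l_1=k_2+1}^{k_1-1} A(l_1)$; each such $l_1$ is a newcomer (strictly between the two bottom entries), while $l_1=k_1$ and $l_1=k_2$ are forbidden by condition $(3')$, since the single top entry has no neighbour to pair with. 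Hence $\SC=1$ for every surviving $l_1$ and the two sides agree.

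For the inductive step I would use recursion (\ref{sumOpRec2}), which peels off the last coordinate in a way that matches the combinatorial structure at the right end of the triangle: its first term pairs the inner sum $\sum_{l_{n-1}=k_{n-1}}^{k_n}$ with the reduced operator $\sum_{(l_1,\ldots,l_{n-2})}^{(k_1,\ldots,k_{n-1})}$, while its second term forces the pair $l_{n-2}=l_{n-1}=k_{n-1}$ against $\sum_{(l_1,\ldots,l_{n-3})}^{(k_1,\ldots,k_{n-2})}$. Applying (\ref{sumExtDef}) to the inner sum (which is an extended, sign-reversed sum exactly when $k_{n-1}>k_n$) shows that the first term enumerates precisely the penultimate rows whose last entry $l_{n-1}\in\{k_n+1,\ldots,k_{n-1}-1\}$ is a newcomer, with one extra factor $-1$ accounting for that newcomer; the second term enumerates the rows with $l_{n-1}=k_{n-1}$, which by condition $(3')$ must come with the left-neighbour pair $l_{n-2}=k_{n-1}$, the explicit $-1$ accounting for that pair. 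The remaining possibility $l_{n-1}=k_n$ (equal to its South-East-neighbour while strictly below its South-West-neighbour) is correctly excluded both by the range of the first term and by condition $(3')$. In each term I would then invoke the induction hypothesis on the shorter bottom rows $(k_1,\ldots,k_{n-1})$ and $(k_1,\ldots,k_{n-2})$ and verify that the reduced sign statistic, augmented by the local contribution of $l_{n-1}$, reproduces $\SC(\mathbf{k};\mathbf{l})$ for the full row.

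The main obstacle is reconciling the constraints of the reduced problems with those of the full problem at the junction position $n-2$: in the reduced triangle $l_{n-2}$ is the last entry and has no right neighbour, so condition $(3')$ treats it differently than in the full triangle, and naively appending a pair $k_{n-1},k_{n-1}$ could create a forbidden third copy of $k_{n-1}$ or alter a newcomer count. The key point to verify --- and where the hypothesis that every value of $\mathbf{k}$ occurs at most twice is essential --- is that the correction term of (\ref{sumOpRec2}) supplies exactly the configurations that the reduced set $\mathcal{P}(k_1,\ldots,k_{n-1})$ misses, namely those with $l_{n-2}=k_{n-1}<k_{n-2}$, while the at-most-twice condition together with $(3')$ rules out $l_{n-3}=k_{n-1}$, so no three equal entries appear in a row and the newcomer status of $l_1,\ldots,l_{n-3}$ is unchanged when passing between the reduced and the full bottom row. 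Assembling these matched pairs of terms, and treating the degenerate case $k_{n-1}=k_n$ (where the first term contributes the single forced value $l_{n-1}=k_{n-1}$) alongside the generic case $k_{n-1}>k_n$, then yields (\ref{dmtSumOpEq}).
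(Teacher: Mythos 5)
Your proposal is correct and follows essentially the same route as the paper: induction on $n$ using the summation-operator recursions, splitting the penultimate rows according to whether $l_{n-1}$ is a newcomer in $\{k_n+1,\ldots,k_{n-1}-1\}$ or forms the pair $l_{n-2}=l_{n-1}=k_{n-1}$, and matching the sign statistic locally at the junction (the paper deviates only in the degenerate case $k_{n-1}=k_n$, where it uses recursion (\ref{sumOpRec}) and the shift $k_{n-1}\mapsto k_{n-1}-1$, trading the pair for a newcomer, whereas your two-term treatment via (\ref{sumOpRec2}) also works). One small repair: at $n=3$ the correction term of (\ref{sumOpRec2}) involves $\sum_{()}^{(k_1)}$, which lies below the lemma's stated range $n\geq 2$, so you should either check $n=3$ directly as a base case (the paper verifies $n=2,3$) or adopt the convention $\mathcal{P}(k_1)=\{()\}$ with empty sign statistic.
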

\begin{proof}
It is instructive to check the Lemma for $n=2,3$. For $n \geq 4$, distinguish
between the case $k_{n-1} > k_n$ and $k_{n-1} = k_n$: \\

\noindent\underline{Case $1$}:
If $k_{n-1} > k_n$, recursion (\ref{sumOpRec2}) of the summation operator,
(\ref{sumExtDef}) and the induction hypothesis yield
\begin{multline*}
\sum_{(l_1,\ldots,l_{n-1})}^{(k_1,\ldots,k_n)} A(l_1,\ldots,l_{n-1}) \\ =
-\sum_{(l_1,\ldots,l_{n-2})}^{(k_1,\ldots,k_{n-1})}
\sum_{l_{n-1}=k_n+1}^{k_{n-1}-1} A(l_1,\ldots,l_{n-1}) -
\sum_{(l_1,\ldots,l_{n-3})}^{(k_1,\ldots,k_{n-2})}
A(l_1,\ldots,l_{n-3},k_{n-1},k_{n-1}) \\
= \sum_{(l_1,\ldots,l_{n-2}) \in \mathcal{P}(k_1,\ldots,k_{n-1})}
(-1)^{\SC(k_1,\ldots,k_{n-1};l_1,\ldots,l_{n-2})+1}
\sum_{l_{n-1}=k_n+1}^{k_{n-1}-1} A(l_1,\ldots,l_{n-1}) \\ +
\sum_{(l_1,\ldots,l_{n-3})\in \mathcal{P}(k_1,\ldots,k_{n-2})}
(-1)^{\SC(k_1,\ldots,k_{n-2};l_1,\ldots,l_{n-3})+1}
A(l_1,\ldots,l_{n-3},k_{n-1},k_{n-1}). 
\end{multline*}
In order to see that this is indeed equal to the right-hand side of
(\ref{dmtSumOpEq}) let us show that for $k_{n-1}> k_n$
\begin{equation}
\label{splitCase1}
\mathcal{P}(k_1,\ldots,k_n) = \bigcup_{l_{n-1} = k_n+1}^{k_{n-1}-1}
\mathcal{P}(k_1,\ldots,k_{n-1}) \times \{l_{n-1}\} \;\cup\;
\mathcal{P}(k_1,\ldots,k_{n-2}) \times \{(k_{n-1},k_{n-1})\}
\end{equation}
holds. By definition of DMTs the value
of $l_{n-1}$ in all rows of $\mathcal{P}(k_1,\ldots,k_n)$ is either strictly
between $k_{n-1}$ and $k_n$ or equal to $k_{n-1}$.\\

\noindent\underline{Case $1.1$}: If $k_{n-1} > l_{n-1} > k_{n}$ and $k_{n-2} >
k_{n-1}$, then $k_{n-2} \geq l_{n-2} > k_{n-1}$ on both sides of
(\ref{splitCase1}).  If $k_{n-2}=k_{n-1}$, then $l_{n-2} = k_{n-2}$ on both
sides. Since $l_{n-1}$ is a newcomer, we have
$\SC(k_1,\ldots,k_n;l_1,\ldots,l_{n-1}) =
\SC(k_1,\ldots,k_{n-1};l_1,\ldots,l_{n-2})+1$. \\

\noindent\underline{Case $1.2$}: If $l_{n-1} = k_{n-1}$, then
by condition $(3')$ of DMTs, we have $l_{n-2} = k_{n-1}$. If $k_{n-3} >
k_{n-2}$, then $l_{n-3} > k_{n-2}$ on both sides: For the right-hand side of
(\ref{splitCase1}) this is clear by
definition. For the left-hand side note that $k_{n-3} > l_{n-3} = k_{n-2}$
would -- by condition $(3')$ -- imply that $l_{n-3} = l_{n-2} = l_{n-1}$,
contradiction. If $k_{n-3} = k_{n-2}$, then $l_{n-3}
= k_{n-2}$ on both sides of (\ref{splitCase1}). Note that, if $k_{n-3} =
k_{n-2}$, then by the Lemma's assumption $k_{n-2} > k_{n-1}$. Thus, condition
$(2)$ of DMTs imposes the same restrictions on both sides. The pair
$(l_{n-2},l_{n-1})$ contributes one sign-change, and thus $\SC(k_1,\ldots,k_n;l_1,\ldots,l_{n-1})=\SC(k_1,\ldots,k_{n-2};l_1,\ldots,l_{n-3})+1$.
\\

\noindent\underline{Case $2$}: If $k_{n-1} = k_n$, recursion (\ref{sumOpRec})
and the induction hypothesis yield
\begin{multline*}
\sum_{(l_1,\ldots,l_{n-1})}^{(k_1,\ldots,k_{n-1},k_{n-1})} A(l_1,\ldots,l_{n-1})
= \sum_{(l_1,\ldots,l_{n-2})}^{(k_1,\ldots,k_{n-2},k_{n-1}-1)} A(l_1,\ldots,l_{n-2},k_{n-1}) \\
=\sum_{(l_1,\ldots,l_{n-2})\in \mathcal{P}(k_1,\ldots,k_{n-2},k_{n-1}-1)}
(-1)^{\SC(k_1,\ldots,k_{n-2},k_{n-1}-1;l_1,\ldots,l_{n-2})}
A(l_1,\ldots,l_{n-2},k_{n-1}) \\ =\sum_{(l_1,\ldots,l_{n-1}) \in
\mathcal{P}(k_1,\ldots,k_{n-2},k_{n-1},k_{n-1})}
(-1)^{\SC(\mathbf{k};\mathbf{l})} A(l_1,\ldots,l_{n-1}).
\end{multline*}
For the last equality let us show that 
\begin{equation}
\label{splitCase2}
\mathcal{P}(k_1,\ldots,k_{n-2},k_{n-1},k_{n-1}) =
\mathcal{P}(k_1,\ldots,k_{n-2},k_{n-1}-1) \times \{k_{n-1}\}.
\end{equation}
The entry $l_{n-1}$ is equal to $k_{n-1}$ on both sides of (\ref{splitCase2})
and does not contribute a sign-change. By the Lemma's assumption
$k_{n-2} > k_{n-1}$ and hence $k_{n-2} \geq l_{n-2} \geq k_{n-1}$ with
condition $(2)$ of DMTs again imposing the same restrictions on both sides. Note
that $\SC(k_1,\ldots,k_{n-2},k_{n-1},k_{n-1};l_1,\ldots,l_{n-1})=
\SC(k_1,\ldots,k_{n-2},k_{n-1}-1;l_1,\ldots,l_{n-2})$ indeed holds: If $k_{n-2}
> l_{n-2} > k_{n-1}$, then $l_{n-2}$ is a newcomer on both sides, and if
$l_{n-2} = k_{n-1}$, then $l_{n-2}$ is in a pair on the left-hand side and a
newcomer on the right-hand side.
\end{proof}

Extend the domain of the sign-changes function $\SC$ to DMTs by defining 
\[
\SC(A):= \sum_{i=1}^{n-1} \SC(a_{i+1,1},\ldots,{a_{i+1,i+1};a_{i,1},\ldots,a_{i,i}}),
\]
where $A=(a_{i,j})_{1\leq j \leq i \leq n}$ is a DMT with $n$ rows. Applying
Lemma \ref{sumOpDMT} to $\alpha(n;k_1,\ldots,k_n)$ establishes the connection
between $\alpha(n;k_1,\ldots,k_n)$ and $\mathcal{W}_n(k_1,\ldots,k_n)$:
\begin{corollary}
\label{mainProofStepOne}
Let $k_1 \geq k_2 \geq \cdots \geq k_n$. Then
\begin{equation}
\label{mainProofStepOneEq}
\alpha(n;k_1,\ldots,k_n) = \sum_{A \in \wn(k_1,\ldots,k_n)} (-1)^{\SC(A)}
\end{equation} 
holds for $n \geq 1$.
\end{corollary}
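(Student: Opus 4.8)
The plan is to prove (\ref{mainProofStepOneEq}) by induction on $n$, with the recursion (\ref{alphaRec}) and Lemma \ref{sumOpDMT} doing the main work. For $n=1$ both sides equal $1$: there is a single DMT, consisting of the entry $k_1$, whose statistic $\SC$ is an empty sum. For the inductive step I would first dispose of the degenerate case in which some integer occurs at least three times in $(k_1,\ldots,k_n)$. As the sequence is weakly decreasing this forces three consecutive identical entries, so $\wn(k_1,\ldots,k_n)$ is empty by condition $(2)$ and the right-hand side vanishes; the left-hand side vanishes by the preceding Lemma. Hence I may assume each integer appears at most twice, which is exactly the hypothesis of Lemma \ref{sumOpDMT}.

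Applying the recursion (\ref{alphaRec}) and then Lemma \ref{sumOpDMT} to the polynomial $A(l_1,\ldots,l_{n-1}) = \alpha(n-1;l_1,\ldots,l_{n-1})$ gives
\[
\alpha(n;k_1,\ldots,k_n) = \sum_{(l_1,\ldots,l_{n-1}) \in \mathcal{P}(k_1,\ldots,k_n)} (-1)^{\SC(\mathbf{k};\mathbf{l})}\, \alpha(n-1;l_1,\ldots,l_{n-1}).
\]
Every element of $\mathcal{P}(k_1,\ldots,k_n)$ is a row of a DMT, hence weakly decreasing, so the induction hypothesis applies and replaces each $\alpha(n-1;l_1,\ldots,l_{n-1})$ by $\sum_{A' \in \mathcal{W}_{n-1}(l_1,\ldots,l_{n-1})} (-1)^{\SC(A')}$, turning the right-hand side into a double sum over penultimate rows and DMTs sitting above them.

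The concluding step is to read this double sum as a single sum over $\wn(k_1,\ldots,k_n)$. I would use the map sending a DMT $A$ with bottom row $(k_1,\ldots,k_n)$ to the pair formed by its penultimate row $(l_1,\ldots,l_{n-1}) \in \mathcal{P}(k_1,\ldots,k_n)$ and by the DMT $A'$ consisting of its top $n-1$ rows; the inverse glues $A'$ onto the bottom row. Granting this is a bijection, the additivity $\SC(A) = \SC(\mathbf{k};\mathbf{l}) + \SC(A')$ is immediate from the definition of $\SC(A)$ as a sum of row-pair contributions, the term $\SC(\mathbf{k};\mathbf{l})$ being precisely the $i=n-1$ summand; combining the two sign exponents then yields (\ref{mainProofStepOneEq}). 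The main obstacle is verifying that the map really is a bijection, i.e. that any $A' \in \mathcal{W}_{n-1}(l_1,\ldots,l_{n-1})$ can be glued below a penultimate row from $\mathcal{P}(k_1,\ldots,k_n)$ without violating condition $(3)$ across the junction. This reduces to the locality of $(3')$: once one observes that the constraints linking rows $n-1$ and $n$ depend only on those two rows and are already guaranteed by membership in $\mathcal{P}$, while all remaining constraints are exactly those defining $\mathcal{W}_{n-1}(l_1,\ldots,l_{n-1})$, the decomposition is seen to be a bijection and the proof closes.
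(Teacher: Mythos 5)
Your proposal is correct and follows essentially the same route as the paper: induction on $n$, applying the recursion (\ref{alphaRec}) together with Lemma \ref{sumOpDMT} and the induction hypothesis, then reading the double sum as a sum over $\wn(k_1,\ldots,k_n)$ via the row-removal decomposition and the additivity of $\SC$. You are in fact somewhat more careful than the paper's terse proof, since you explicitly dispose of the case of three identical entries (which the paper handles only in the surrounding text via its first lemma) and verify the locality of condition $(3')$ needed for the gluing bijection.
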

\begin{proof}
The case $n=1$ is trivial. For $n \geq 2$ apply (\ref{alphaRec})
together with Lemma \ref{sumOpDMT} and the induction hypothesis to see that
\begin{multline*}
\alpha(n;k_1,\ldots,k_n) = \sum_{(l_1,\ldots,l_{n-1}) \in
\mathcal{P}(k_1,\ldots,k_n)} (-1)^{\SC(\mathbf{k};\mathbf{l})} \alpha(n-1;l_1,\ldots,l_{n-1}) \\ = \sum_{(l_1,\ldots,l_{n-1}) \in \mathcal{P}(k_1,\ldots,k_n)} (-1)^{\SC(\mathbf{k};\mathbf{l})} \sum_{A \in \mathcal{W}_{n-1}(l_1,\ldots,l_{n-1})} (-1)^{{sc}(A)} 
= \sum_{A \in \mathcal{W}_n(k_1,\ldots,k_n)} (-1)^{\SC(A)}. \\
\end{multline*}
\end{proof}
In order to complete the proof of Theorem \ref{mainTheorem}, it remains to
be shown that the two statistics $\SC(A)$ and $\binom{n}{2}+\dd(A)$ have the
same parity.
\begin{lemma}
\label{statConnLemma}
Each $A \in \wn\br$ satisfies
\[
(-1)^{\SC(A)} = (-1)^{\binom{n}{2} + \dd(A)}, \quad n \geq 1.
\]
\end{lemma}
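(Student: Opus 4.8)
The plan is to reduce the assertion to a purely local statement about two consecutive rows and then sum it over the whole triangle. Write $p_i$ and $\nu_i$ for the number of pairs and of newcomers in row $i$, so that by the definition of $\SC$ we have $\SC(A) = \sum_{i=1}^{n-1}(p_i+\nu_i)$, where the $i$-th summand is $\SC(a_{i+1,1},\dots,a_{i+1,i+1};a_{i,1},\dots,a_{i,i}) = p_i+\nu_i$. For $i<n$ write $q_i$ for the number of duplicate-descendant pairs in row $i$, i.e.\ pairs whose value also forms a pair in row $i+1$. Since every pair in the bottom row is a duplicate-descendant, $\dd(A) = \sum_{i=1}^{n-1} q_i + p_n$. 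With these abbreviations the whole statement will follow from the single local congruence
\[
\nu_i + q_i + p_{i+1} \equiv i \pmod 2, \qquad 1 \le i \le n-1,
\]
combined with $p_1 = 0$ and $\sum_{i=1}^{n-1} i = \binom{n}{2}$.

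The heart of the argument is the bookkeeping proving this local congruence. I would fix two consecutive rows, the upper one $\mathbf{l}=(l_1,\dots,l_i)$ and the lower one $\mathbf{k}=(k_1,\dots,k_{i+1})$, both weakly decreasing with $k_j \ge l_j \ge k_{j+1}$. For each integer $v$ let $a_v,b_v\in\{0,1,2\}$ be the multiplicities of $v$ in $\mathbf{k}$ and in $\mathbf{l}$, and let $N_{ab}$ count the values $v$ with $(a_v,b_v)=(a,b)$. The key step is to show, using the interlacing together with conditions $(2)$ and $(3')$, that only the patterns $(0,1),(1,0),(1,2),(2,1),(2,2)$ can occur. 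Concretely: a pair in $\mathbf{l}$ forces its value onto the common lower neighbour, so $a_v\ge1$, excluding $(0,2)$; a pair in $\mathbf{k}$ at positions $j,j+1$ forces $l_j$ to equal that value, so $b_v\ge1$, excluding $(2,0)$; and $(3')$ prevents a singleton of $\mathbf{l}$ from equalling exactly one of its lower neighbours while being strict on the other side, which excludes $(1,1)$ (a non-newcomer singleton with $a_v=1$). This exhaustive exclusion is the main obstacle, since it is where conditions $(2)$ and $(3')$ genuinely enter, and each case must be checked against the strict/weak inequalities of $(3')$.

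Once the classification is available, the congruence is immediate. Newcomers are precisely the values with $(a_v,b_v)=(0,1)$, so $\nu_i=N_{01}$; duplicate-descendant pairs are those with $(a_v,b_v)=(2,2)$, so $q_i=N_{22}$; the pairs of $\mathbf{k}$ are the values with $a_v=2$, so $p_{i+1}=N_{21}+N_{22}$; and $i=\sum_v b_v = N_{01}+2N_{12}+N_{21}+2N_{22}$. Reducing modulo $2$, both $\nu_i+q_i+p_{i+1}$ and $i$ collapse to $N_{01}+N_{21}$, which proves the local congruence.

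Finally I would sum the local congruence over $i=1,\dots,n-1$. The right-hand side gives $\binom{n}{2}$. On the left, $\sum_{i=1}^{n-1}p_{i+1}=\sum_{j=1}^{n}p_j$ because $p_1=0$, and $\sum_{i=1}^{n-1}q_i=\dd(A)-p_n$; after the $p_n$ terms cancel, the left-hand side becomes $\sum_{i=1}^{n-1}(p_i+\nu_i)+\dd(A)=\SC(A)+\dd(A)$. Hence $\SC(A)+\dd(A)\equiv\binom{n}{2}\pmod 2$, which is exactly the claimed equality of parities. I expect this concluding re-indexing to be routine; the only delicate part is the verification that the three excluded multiplicity patterns are truly incompatible with the DMT axioms.
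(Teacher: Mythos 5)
Your proof is correct, and it takes a genuinely different route from the paper's. The paper argues globally: it classifies every entry above the bottom row as a peak, a newcomer, or a member of a pair, giving $\binom{n}{2} \equiv p(A)+n(A) \pmod 2$, and then needs a one-to-one correspondence between peaks and base-pairs (obtained by following a chain of equal pairs from the singleton at its top down to the pair at its bottom), a correspondence the paper only asserts with a figure. You instead prove the local congruence $\nu_i+q_i+p_{i+1}\equiv i \pmod 2$ for each pair of consecutive rows via your multiplicity-pattern classification and then telescope. Your exclusions are sound: with the interlacing $k_j\ge l_j\ge k_{j+1}$, a pair $l_j=l_{j+1}=v$ forces $k_{j+1}=v$ (ruling out $(0,2)$), a pair $k_j=k_{j+1}=v$ forces $l_j=v$ (ruling out $(2,0)$), and $(1,1)$ is excluded verbatim by axiom $(3)$ of DMTs -- you need not even route through $(3')$, though your $(3')$ argument also works once one notes that weak decrease along rows forces the single occurrence of $v$ in the lower row to sit directly below the singleton $l_j=v$, at position $j$ or $j+1$. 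It is worth observing that your admissible patterns $(2,1)$ and $(1,2)$ are exactly the paper's peaks and non-bottom base-pairs, so your rowwise count silently reproves the peak/base-pair correspondence without ever setting it up globally: what your approach buys is a mechanical, self-contained verification in which the only structural input is the pattern exclusion and the row-length identity $i=\sum_v b_v$, while the paper's version is shorter once its chain bijection is granted. Your concluding bookkeeping ($p_1=0$, $\dd(A)=\sum_{i=1}^{n-1}q_i+p_n$, reindexing $\sum_{i=1}^{n-1}p_{i+1}$) is also correct.
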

\begin{proof}
By definition of DMTs, if a row contains an integer $x$ exactly once, then the
row below contains $x$ either not at all -- i.e. $x$ is a newcomer -- or twice.
In the latter case let us call $x$ a \emph{peak}. Let $p(A)$ denote the number
of peaks in $A$, $n(A)$ denote the number of newcomers in $A$ and $p_i(A)$
denote the number of pairs in the $i$-th row of $A$. Since newcomers and
peaks are by definition not in the bottom row and every entry of a DMT not in
the bottom row is either a peak, a newcomer or in a pair, it follows that
\[
\binom{n}{2} = p(A) + n(A) + 2 \sum_{i=1}^{n-1} p_{i}(A) \equiv p(A) + n(A) \mod
2.
\]
Let us call a pair $(x,x)$ a \emph{base-pair}, if it is located in the bottom
row or the row below contains $x$ exactly once. Let $b(A)$ denote the number of base-pairs in $A$. By
definition, duplicate-descendants are those pairs, which are either in the
bottom row or not a base-pair. Further note that the set of
peaks and the set of base-pairs are in one-to-one correspondence (see Figure
\ref{dmtStat}). Hence, we have
\[
\dd(A) = \sum_{i=1}^n p_i(A) - b(A) + p_n(A) \equiv \sum_{i=1}^{n-1} p_i(A) -
p(A) \mod 2.
\]
Since $\SC(A) = n(A) + \sum_{i=1}^{n-1} p_i(A)$ it follows that
\[
\binom{n}{2} \equiv \sum_{i=1}^{n-1} p_i(A) - \dd(A) + n(A) = \SC(A) - \dd(A)
\mod 2.
\]
\end{proof}

\begin{figure}
\begin{center}
$
\begin{array}{cccccccccccccccc}
&&&&&&& 6 \\
&&&&&& 6 && 6 \\
&&&&& 6 && 6 && 3 \\
&&&& 6 && 6 && 3 && 3 \\
&&& 6 && 6 && 4 && 3 && 3 \\
&& 7 && 6 && 5 && 3 && 3 && 2 \\
& 7 && 7 && 5 && 5 && 3 && 2 && 2 \\
8 && 7 && 5 && 5 && 3 && 3 && 2 && 1
\end{array}
$
\includegraphics[width=8cm]{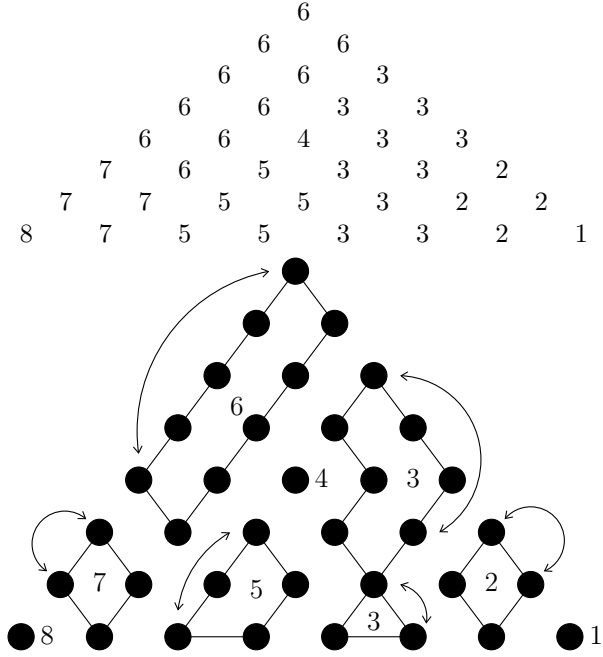}
\caption{A DMT and its structural decomposition \label{dmtStat}}
\end{center}
\end{figure}

\section{\sts DMTs and 2-ASMs}
\label{section2ASM}

Recall the bijection transforming an ASM $(a_{i,j})_{1\leq
i,j \leq n}$ into a Monotone Triangle with bottom row $(1,2\ldots,n)$: The
$i$-th row of the Monotone Triangle contains an entry $j$, iff $b_{i,j} :=
\sum_{k=1}^i a_{k,j} = 1$. In our case let $b_{i,j}$ be the number of entries
$j$ in row $i$ of a DMT with bottom row $(n,n,n-1,n-1,\ldots,1,1)$. Define a
$(2n)\times n$-matrix $A$, such that $\sum_{k=1}^i a_{k,j} = b_{i,j}$, i.e.
$a_{1,j} := b_{1,j}$ and $a_{i,j} := b_{i,j}-b_{i-1,j}$, $2 \leq i \leq 2n$.
Figure \ref{2asm-example} contains an example of a $2$-ASM of size $5$ and its
corresponding DMT.

In order to describe the impact of the change from Monotone Triangles to
DMTs on the level of matrices, one may consider the following equivalent
definition of ASMs: An Alternating Sign Matrix (ASM) of size $n$ is an $n
\times n$-matrix, where each row and each column is a word of the ASM-machine
in Figure \ref{asmCol}. The semantics of the machine is the following: 
When generating a row/column of an Alternating Sign Matrix, the initial state is
$\Sigma = 0$, i.e. the partial row/column sum is $0$. One may then stay at the
state taking the $0$-loop or transit to the state $\Sigma = 1$ by taking the
edge labelled with $1$. In the state $\Sigma = 1$ -- i.e. the partial row/column sum
is currently equal to $1$ -- one may either stay at the state by taking the
$0$-loop or transit back to the state $\Sigma = 0$ taking the edge labelled with
$-1$. As the row/column sum is equal to $1$, in the end one has to be at the
state $\Sigma = 1$.
\begin{figure}
\begin{center}
\includegraphics[width=8cm]{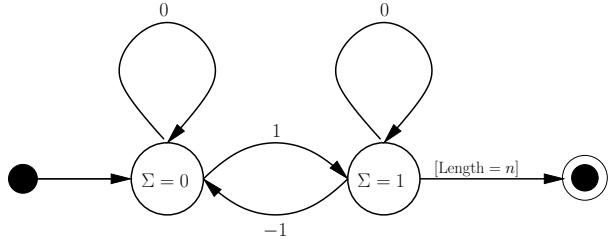}
\caption{ASM-machine generating the rows and columns of ordinary
ASMs of size $n$.\label{asmCol}}
\end{center}
\end{figure}
From this vantage point we can give the following equivalent definition of
$2$-ASMs: A $2$-ASM of size $n$ is a $(2n) \times n$-matrix, where each row is a
word of the ASM-machine in Figure \ref{asmCol} and each column is a word of the
$2$-ASM-machine in Figure \ref{dmtCol}.
\begin{figure}
\begin{center}
\includegraphics[width=8cm]{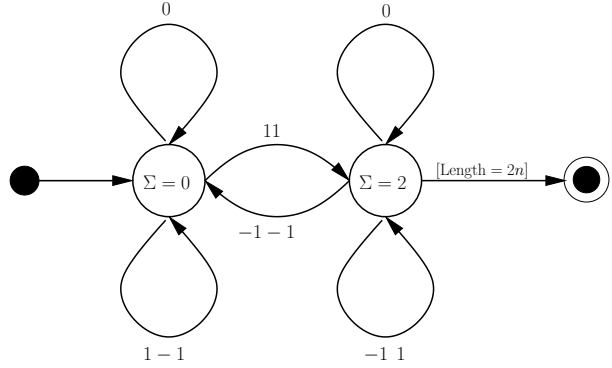}
\caption{$2$-ASM machine generating the columns of $2$-ASMs of
size $n$.\label{dmtCol}}
\end{center}
\end{figure}
\begin{theorem}
\label{dmtBijection}
The set $\mathcal{W}_{2n}(n,n,n-1,n-1,\ldots,1,1)$ is in one-to-one
correspondence with the set of $2$-ASMs of size $n$.
\end{theorem}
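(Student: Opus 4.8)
The plan is to exploit that a DMT $M=(m_{i,j})_{1\le j\le i\le 2n}$ in $\mathcal{W}_{2n}(n,n,n-1,n-1,\ldots,1,1)$ and the associated matrix $A=(a_{i,j})$ are two encodings of one and the same object, the occupancy array $(b_{i,j})$ introduced just before the theorem: for $M$ one sets $b_{i,j}=\#\{r: m_{i,r}=j\}$, while from $A$ one reads $b_{i,j}=\sum_{k=1}^i a_{k,j}$, the two being linked by the invertible relation $a_{i,j}=b_{i,j}-b_{i-1,j}$ (with $b_{0,j}:=0$). Since $M\leftrightarrow(b_{i,j})$ and $(b_{i,j})\leftrightarrow A$ are both bijective encodings, it suffices to show that an array $(b_{i,j})$ arises from such a DMT if and only if it arises from a $2$-ASM. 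First I would check that the shapes match: iterating the diagonal inequalities of condition $(1)$ gives $m_{i,1}\le m_{i+1,1}\le\cdots\le m_{2n,1}=n$ and $m_{i,i}\ge m_{i+1,i+1}\ge\cdots\ge m_{2n,2n}=1$, and since each row is weakly decreasing every entry of $M$ lies in $\{1,\ldots,n\}$; hence only the columns $j=1,\ldots,n$ occur and $A$ is genuinely a $(2n)\times n$-matrix.

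Next I would set up a dictionary between the defining conditions of a DMT and the machine conditions of a $2$-ASM on the level of $(b_{i,j})$. For the rows, writing row $i$ of $M$ as the weakly decreasing word that contains $j$ with multiplicity $b_{i,j}$, condition $(1)$ restricted to the rows $i-1,i$ is exactly the interlacing
\[
m_{i,1}\ge m_{i-1,1}\ge m_{i,2}\ge m_{i-1,2}\ge\cdots\ge m_{i-1,i-1}\ge m_{i,i},
\]
which in turn is equivalent to
\[
\sum_{j=1}^c a_{i,j}=\#\{r: m_{i,r}\le c\}-\#\{r: m_{i-1,r}\le c\}\in\{0,1\}\qquad\text{for all }c.
\]
Because the full row sum equals $i-(i-1)=1$, this says precisely that row $i$ of $A$ is a word of the ASM-machine of Figure \ref{asmCol}.

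For the columns, condition $(2)$ is the statement $b_{i,j}\le 2$, i.e. every partial column sum lies in $\{0,1,2\}$, and the prescribed bottom row forces $b_{2n,j}=2$, i.e. each column of $A$ sums to $2$; the lower bound $b_{i,j}\ge 0$ is automatic. Finally, condition $(3)$ forbids $b_{i,j}=b_{i+1,j}=1$, which is exactly the prohibition of the $0$-loop at the state $\Sigma=1$ -- the rule that makes the non-zero entries of a column \emph{occur in pairs} -- so together with the previous observations it says precisely that each column of $A$ is a word of the $2$-ASM-machine of Figure \ref{dmtCol}. The dictionary now yields the theorem: $(b_{i,j})$ satisfies $(1)$--$(3)$ if and only if every row of $A$ is an ASM-machine word and every column a $2$-ASM-machine word, i.e. if and only if $A$ is a $2$-ASM, and the composite $M\leftrightarrow(b_{i,j})\leftrightarrow A$ is the desired bijection.

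I expect the main obstacle to be the careful verification of the two equivalences in the second step when entries are repeated: one must check that presenting each row as a weakly decreasing word with the prescribed multiplicities keeps the diagonal inequalities equivalent to interlacing even when ties occur along a diagonal, and that the counting reformulation of interlacing survives such ties. One must also make sure that condition $(3)$ is genuinely \emph{equivalent} to the no-dwell-at-$1$ rule of the column machine, rather than merely implied by it, so that the inverse map really lands among DMTs. By comparison the dimension count and the column-sum bookkeeping are routine.
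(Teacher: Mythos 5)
Your proposal is correct, and although the bijection itself is forced to be the one the paper constructs (the occupancy array $b_{i,j}$ with $a_{i,j}=b_{i,j}-b_{i-1,j}$), your verification takes a genuinely different route. The paper proves the two directions separately: from a $2$-ASM it builds the triangle and excludes violations of condition $(3)$ by a first-violation contradiction against the machine of Figure \ref{dmtCol}, defers the diagonal monotonicity and the row alternation to analogy with the ASM/Monotone-Triangle bijection, excludes entries $\pm 2$ by the pair/interlaced-neighbour observation, and for the inverse runs a state-by-state case analysis of the column machine. You instead establish a condition-by-condition \emph{equivalence} at the level of $(b_{i,j})$: condition $(1)$ between consecutive rows is interlacing, which via your counting identity is exactly ``partial row sums in $\{0,1\}$'', i.e.\ rows are words of the machine in Figure \ref{asmCol}; condition $(2)$ plus the prescribed bottom row is exactly ``partial column sums in $\{0,1,2\}$ with total $2$''; condition $(3)$ is exactly the no-dwell-at-$\Sigma=1$ rule. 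Since these are if-and-only-if statements, both directions come out at once, and you make rigorous precisely what the paper delegates to analogy, including the shape check (absent from the paper) that the diagonal inequalities force all entries into $\{1,\ldots,n\}$. One point you should state explicitly rather than bury in ``together with the previous observations'': the column dictionary alone (sums in $\{0,1,2\}$, no consecutive partial sums equal to $1$, total $2$) would still tolerate jumps $b_{i,j}\colon 0\to 2$, i.e.\ entries $\pm 2$, which the machine forbids; these are excluded only by the row dictionary, whose partial sums in $\{0,1\}$ force $a_{i,j}\in\{0,1,-1\}$, so the two dictionaries must be invoked jointly before identifying columns with machine words. With that caveat spelled out, and the tie-robustness of the interlacing equivalence checked as you indicate, your argument closes.
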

\begin{proof}
Let $A = (a_{i,j})_{\substack{i=1,\ldots,2n\\ j=1,\ldots,n}}$ be a $2$-ASM of
size $n$. Define a matrix $B = (b_{i,j})_{\substack{i=1,\ldots,2n\\
j=1,\ldots,n}}$ by $b_{i,j} = \sum_{k=1}^{i} a_{k,j}$. Each of the first $i$
rows of $A$ sum up to $1$, thus the $i$-th row of $B$ sums up to $i$. By
definition of the $2$-ASM-machine, the first $i$ entries of each column sum up
to $0,1$ or $2$, thus $b_{i,j} \in \{0,1,2\}$. Hence, one can define a
triangular array $T$ of integers in $\{1,2,\ldots,n\}$ with $2n$ rows, where
each row is weakly decreasing, by specifying that row $i$ contains $b_{i,j}$
entries $j$. As $b_{i,j} \in \{0,1,2\}$ each row
of $T$ contains an integer at most twice. The fact that the column sums are
$2$ implies that the bottom row of $T$ is $(n,n,n-1,n-1,\ldots,1,1)$.

According to the terminology of duplicate-descendants, let us call the last
condition of DMTs, the No-Single-Descendant-Condition (NSD-Condition). Suppose
$T$ violates the NSD-condition of Decreasing Monotone Triangles in rows $i$ and $i+1$ for the first time. This would mean that there exists a
column $j$ in $B$ such that $b_{i,j} = b_{i+1,j} = 1$. If $i=1$, the $j$-th
column of $A$ has to start with $1,0$, which is not allowed by the
$2$-ASM-machine. If $i>1$ , then the minimality of $i$ implies that $b_{i-1,j}
\in \{0,2\}$. But $b_{i-1,j} = 0$ would mean that after $i-1$ entries in the
column $j$ of $A$, one is at the state $\Sigma = 0$ of the $2$-ASM machine. As
$b_{i,j} = b_{i+1,j} = 1$, the next two column entries of $A$ are $1,0$
contradicting the definition of the $2$-ASM machine. Similarly, $b_{i-1,j} = 2$ together with $b_{i,j} = b_{i+1,j} = 1$ implies being at the
state $\Sigma = 2$ and generating $-1,0$ as next two entries, contradiction.

The weak decrease along diagonals of $T$ can be deduced from the alternating
sign property of each row analagously to the bijection proof between ordinary
ASMs and Monotone Triangles. Hence, $T
\in \mathcal{W}_{2n}(n,n,n-1,n-1,\ldots,1,1)$.

In order to see that the mapping is indeed a bijection we have to show that the
obvious candidate for the inverse mapping yields a $2$-ASM for each
$T\in\mathcal{W}_{2n}(n,n,n-1,n-1,\ldots,1,1)$: So, given $T \in
\mathcal{W}_{2n}(n,n,n-1,n-1,\ldots,1,1)$, define the $(2n)\times n$ matrices
$B$ and $A$ with $b_{i,j}$ being the number of entries $j$ in row $i$ of $T$,
$a_{1,j} = b_{1,j}$ and $a_{i,j} = b_{i,j}-b_{i-1,j}$ for $2 \leq i \leq 2n$.

By definition of $B$ and DMTs we know that $b_{i,j} \in \{0,1,2\}$ and thus
$a_{i,j} \in \{0,1,-1,2,-2\}$. Due to the monotony in DMTs two consecutive
equal entries in a row imply that their interlaced neighbour in the row above
and below has the same value, and thus $a_{i,j} \in \{0,1,-1\}$. The property,
that each row of $A$ sums up to $1$ with the non-zero entries alternating in
sign can again be obtained in a similar manner as for ASMs and ordinary monotone
triangles.

It remains to be shown that each column is a word of the $2$-ASM-machine:
Consider column $j$ of the matrix $A$ or equivalently the entries $j$ in $T$
from top to bottom. If the $i$-th partial column sum of column $j$ in $A$ is
$0$, there are by construction no entries $j$ in row $i$ of $T$. Regarding the
number of entries $j$ in row $i+1$, there are two possibilities:
Either there is still no entry $j$ in row $i+1$, i.e. take the $0$ loop-edge at
state $\Sigma = 0$ of the $2$-ASM-machine, or there exists exactly one entry
$j$. In the latter case we know from the NSD-condition that in the subsequent
row, there is no entry $j$ or two entries $j$, i.e. take the
$(1,-1)$ loop or transit to the state $\Sigma = 2$ taking the $(1,1)$ edge.

If the $i$-th partial column sum of the $j$-th column in $A$ is
$2$, then there are exactly two entries $j$ in row $i$ of $T$. Analogously, the
row below contains either two entries $j$ again, i.e. take the $0$-loop at state
$\Sigma=2$, or the row below contains exactly one entry $j$. In the latter case, the NSD-condition implies that the
subsequent row contains either $2$ or $0$ entries $j$, i.e. take $(-1,1)$-loop
or $(-1,-1)$-edge. As the bottom row of $T$ is $(n,n,n-1,n-1,\ldots,1,1)$ all
columns of $A$ sum up to $2$, and thus every column is a word of the $2$-ASM-machine.
\end{proof}

\section{\sts Connections between Monotone Triangles and DMTs}
\label{sectionConnection}

Starting point for the content of this section was the empirical observation
of (\ref{alphaConnEq}). The algebraic proof in this
section uses a methodology, which has been successfully applied in
\cite{FischerNewRefProof}, where the Refined ASM Theorem is reproven: The number
$A_{n,i}$ of ASMs of size $n$ with the first column's unique $1$ situated in row
$i$ is equal to
\[
\binom{n+i-2}{n-1}\frac{(2n-i-1)!}{(n-i)!} \prod_{k=0}^{n-2}
\frac{(3k+1)!}{(n+k)!}.
\]
The methodology consists of the following steps:
\begin{itemize}
  \item Denoting $E_x f(x) := f(x+1)$ and $\Delta_x f(x) := f(x+1) - f(x) = (E_x
  - \id ) f(x)$, show that \[ A_{n,i} = (-1)^{i-1} \Delta_{k_1}^{i-1} \alpha(n;
  k_1,2,3,\ldots,n)|_{k_1=2}. \]
  \item Derive a system of linear equations for $A_{n,i}$ implying that
  $(A_{n,i})_{i=1,\ldots,n}$ is an eigenvector of a certain matrix.
  \item Show that the corresponding eigenspace is one-dimensional.
  \item Show that the conjectured numbers are in this eigenspace too.
  \item Determine the constant factor by which they differ to be $1$.
\end{itemize}
In this section we prove the following theorem using this
methodology.
\begin{theorem}
\label{wniTheorem}
The numbers
\[
W_{n,i}:=\Delta_{k_1}^{i-1} \alpha(2n-1;
k_1,n-1,n-1,n-2,n-2,\ldots,1,1)|_{k_1=n}
\]
are given by
\begin{equation}
\label{wniTheoremEq}
W_{n,i}=\sum_{l=1}^{i} \binom{i-1}{l-1}(-1)^{n+i+l-1}A_{n,l}
\end{equation}
for all $i=1,\ldots,2n-1$, $n\geq 1$.
\end{theorem}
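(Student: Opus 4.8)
The plan is to follow the five-step eigenvector methodology recalled above, applied to $\mathbf{W}_n=(W_{n,1},\ldots,W_{n,2n-1})$. First, to fix the context: writing $\Delta_{k_1}=E_{k_1}-\id$ and expanding, the definition of $W_{n,i}$ gives
\[
W_{n,i}=\sum_{l=1}^{i}\binom{i-1}{l-1}(-1)^{i-l}\,\alpha(2n-1;\,n-1+l,\,n-1,n-1,\ldots,1,1),
\]
while, since $(-1)^{n+i+l-1}=(-1)^{n-1}(-1)^{i-l}$, the target (\ref{wniTheoremEq}) reads $W_{n,i}=(-1)^{n-1}\sum_{l=1}^{i}\binom{i-1}{l-1}(-1)^{i-l}A_{n,l}$. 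Both expressions apply one and the same invertible (unitriangular) transform $\big(\binom{i-1}{l-1}(-1)^{i-l}\big)_{i,l}$, so (\ref{wniTheoremEq}) is equivalent to
\[
\alpha(2n-1;\,n-1+l,\,n-1,\ldots,1,1)=(-1)^{n-1}A_{n,l},\qquad l=1,\ldots,2n-1,
\]
i.e.\ to Theorem \ref{mainTheorem2} (the entries $l>n$ vanishing on both sides). I will therefore prove Theorem \ref{wniTheorem} directly, not assuming Theorem \ref{mainTheorem2}, so that the latter drops out.

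Set $g_n(k_1):=\alpha(2n-1;k_1,n-1,n-1,\ldots,1,1)$, a polynomial of degree $2n-2$ in $k_1$ whose Newton forward differences at $k_1=n$ are exactly $W_{n,1},\ldots,W_{n,2n-1}$, so $\mathbf{W}_n$ and $g_n$ determine each other. The first substantial step is to produce a linear relation among the $W_{n,i}$, exhibiting $\mathbf{W}_n$ as an eigenvector of an explicit $(2n-1)\times(2n-1)$ matrix $M_n$. I would derive this from two inputs: the recursion (\ref{alphaRec})/(\ref{sumOpRec3}), used to strip the variable $k_1$ and rewrite $g_n$ through size-$(2n-2)$ evaluations, together with the symmetry properties of $\alpha$ established in \cite{FischerNumberOfMT}. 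Specialized at the doubled bottom row $(k_1,n-1,n-1,\ldots,1,1)$, these should collapse into a closed linear recurrence among $g_n(n),g_n(n+1),\ldots$, hence among the $W_{n,i}$; the essential feature is that the pairing of the fixed entries folds the size-$(2n-1)$ data onto size-$n$ ASM data. A rank computation on $M_n-\lambda\,\id$ then shows the relevant eigenspace is one-dimensional.

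Next I would verify that $v_i:=\sum_{l=1}^{i}\binom{i-1}{l-1}(-1)^{n+i+l-1}A_{n,l}$ lies in the same eigenspace. Starting from the linear system satisfied by the refined ASM numbers $(A_{n,l})_{l=1}^{n}$ in \cite{FischerNewRefProof}, I would check that the transform $\mathbf{A}\mapsto\mathbf{v}$ conjugates that system into the one defining $M_n$, so that $\mathbf{v}$ is an eigenvector of $M_n$ with the same eigenvalue as $\mathbf{W}_n$. One-dimensionality then forces $\mathbf{W}_n=c\,\mathbf{v}$, and I would pin $c=1$ by matching a single value computable independently of Theorem \ref{mainTheorem2}: for instance the top difference $W_{n,2n-1}$, equal to $(2n-2)!$ times the leading $k_1$-coefficient of $g_n$ (available from the explicit formula for $\alpha$ in \cite{FischerNumberOfMT}), against the corresponding alternating binomial sum of the $A_{n,l}$; a boundary value such as $W_{n,1}=g_n(n)$, evaluated through Corollary \ref{mainProofStepOne}, is an alternative.

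I expect the main obstacle to be the derivation of the eigenvector equation for $\mathbf{W}_n$ together with the one-dimensionality of its eigenspace. Specializing the symmetry of $\alpha$ at the doubled row $(k_1,n-1,n-1,\ldots,1,1)$ forces one to track how the repeated entries interact with the extended signed summation (\ref{sumExtDef}) --- precisely the bookkeeping that already made Lemma \ref{sumOpDMT} delicate --- and to argue that exactly one relation fails to be independent. The second sensitive point is confirming that the particular transform appearing in (\ref{wniTheoremEq}) is exactly the conjugation identifying the $A$-system with the $W$-system; reconciling its signs and index ranges across all of $l=1,\ldots,2n-1$, including the vanishing tail $l>n$, is where the argument is most error-prone.
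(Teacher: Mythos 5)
Your opening reduction to Theorem \ref{mainTheorem2} via the unitriangular binomial transform is correct and matches a remark the paper itself makes, but everything after it is the five-step methodology skeleton (which the paper also states verbatim before its proof) rather than an argument, and the places you leave open are exactly where the paper's new ideas live. The symmetries of $\alpha$ you invoke (cyclic shift from \cite{FischerNewRefProof}, reflection, translation) yield only the \emph{triangular} relation
\[
W_{n,i}=\sum_{k=i}^{2n-1}\binom{n-i}{k-i}(-1)^{i-1}W_{n,k},
\]
and the solution space of this system alone is \emph{not} one-dimensional: for $n=2$ it forces only $W_{2,2}=0$ and leaves $(W_{2,1},0,W_{2,3})$ free, a two-dimensional space, so no amount of rank computation on it can pin down the vector up to scalar. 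The paper's essential extra input is Lemma \ref{wniSymmetry}, the symmetry $W_{n,i}=(-1)^{n-1}W_{n,2n-i}$, which is proved \emph{combinatorially}: iterated application of $\Delta_{k_1}$ is interpreted as a signed enumeration of DMT-trapezoids, these are encoded as $(2n-1)\times(n-1)$ matrices with one modified row generated by the machines of Figures \ref{asmCol} and \ref{dmtCol}, and reflecting such a matrix along its horizontal axis produces the factor $(-1)^{n-1}$ by a loop-counting argument. Only after folding the triangular relation with this symmetry does one obtain the eigen-equation for $\bigl(\binom{n-i}{2n-i-j}(-1)^{n+i}\bigr)_{1\leq i,j\leq 2n-1}$, and even the one-dimensionality of its eigenvalue-$1$ space is far from a routine check: Lemma \ref{eigenspaceDimLemma} reduces the relevant $(2n-2)\times(2n-2)$ determinant, via a Schur-complement factorization and Chu--Vandermonde manipulations, to $\det_{0\le i,j\le n-2}\bigl(\binom{i+j}{j}-\delta_{i,j+1}\bigr)$, which is nonzero only by Behrend's evaluation \cite{BehrendWeightedEnum} identifying it with the ASM number $A_{n-1}$. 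None of this is supplied in your plan, and you concede as much by listing precisely these points as the expected obstacles.

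Your normalization step is also not viable as stated. Fixing $c=1$ by computing $W_{n,1}=g_n(n)$ ``through Corollary \ref{mainProofStepOne}'' is circular: that corollary only rewrites $W_{n,1}$ as a signed enumeration of DMTs with bottom row $(n,n-1,n-1,\ldots,1,1)$, and evaluating that signed sum to $(-1)^{n-1}A_{n,1}$ is exactly the $i=1$ case of the theorem you are proving. The leading-coefficient alternative would require both extracting the top $k_1$-coefficient of $\alpha$ at the doubled specialization and independently evaluating $\sum_{l}\binom{2n-2}{l-1}(-1)^{n+l}A_{n,l}$, neither of which you carry out. The paper instead proves the recursion $W_{n,1}=-\sum_{i=1}^{n-1}\binom{n-1}{i}W_{n-1,i}$ (Lemma \ref{wniRecursionLemma}, by stripping one DMT row and using the cyclic property of $\alpha$) and closes by induction on $n$, checking via Vandermonde convolution that the candidate numbers $X_{n,i}$ satisfy the same recursion. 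Likewise, membership of $(X_{n,i})$ in the eigenspace is verified in Lemma \ref{conjNumbersEVLemma} directly from the product formula for $A_{n,l}$ by binomial-coefficient identities, not by conjugating the linear system of \cite{FischerNewRefProof} as you suggest; that conjugation claim is plausible but nowhere substantiated in your proposal.
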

As motivation on how (\ref{wniTheoremEq}) can be
guessed, let us first note that Theorem \ref{wniTheorem} and
Theorem \ref{mainTheorem2} are equivalent: On the one hand, (\ref{motivationEq})
implies for $i=1,\ldots,2n-1$ that 
\begin{multline*}
W_{n,i} = (E_{k_1}-\id)^{i-1} \alpha(2n-1;k_1,n-1,n-1,\ldots,1,1)|_{k_1=n} \\
= \sum_{l=0}^{i-1} \binom{i-1}{l} (-1)^{i-1-l}
\alpha(2n-1;n+l,n-1,n-1,\ldots,1,1) 
 = \sum_{l=1}^{i} \binom{i-1}{l-1} (-1)^{n+i+l-1} A_{n,l}.
\end{multline*}
Conversely, for $k=1,\ldots,2n-1$ we have that
\begin{multline*}
\alpha(2n-1; n-1+k, n-1,n-1,\ldots,1,1) \\
= E_{k_1}^{k-1} \alpha(2n-1; k_1,n-1,n-1,\ldots,1,1)|_{k_1=n} \\
= (\Delta_{k_1}+\id)^{k-1} \alpha(2n-1; k_1,n-1,n-1,\ldots,1,1)|_{k_1=n} \\
= \sum_{i=0}^{k-1} \binom{k-1}{i} W_{n,i+1} = \sum_{i=0}^{k-1} \binom{k-1}{i}
\sum_{l=1}^{i+1} \binom{i}{l-1}(-1)^{n+i+l} A_{n,l} \\
= \sum_{l=1}^{k} (-1)^{n+l} A_{n,l} \sum_{i=l-1}^{k-1} \binom{k-1}{i}
 \binom{i}{l-1}(-1)^{i}.
\end{multline*}
The inner sum can be simplified using $\binom{n}{k} =
\binom{n}{n-k}$, $\binom{a}{b}=(-1)^b \binom{b-a-1}{b}$ and the Chu-Vandermonde
convolution $\sum_k \binom{r}{m+k}\binom{s}{n-k}=\binom{r+s}{m+n}$
(see \cite{ConcreteMath}, $p.169$):
\begin{multline*}
\sum_{i=l-1}^{k-1} \binom{k-1}{i} \binom{i}{l-1}(-1)^{i} = \sum_{i=l-1}^{k-1}
\binom{k-1}{k-i-1} \binom{i}{i-l+1}(-1)^{i} \\
= (-1)^{l-1} \sum_{i=l-1}^{k-1} \binom{k-1}{k-i-1} \binom{-l}{i-l+1} =
(-1)^{l-1} \binom{k-l-1}{k-l}.
\end{multline*}
Since $\binom{a-1}{a}=0$ for all $a \in \mathbb{Z} \backslash \{0\}$, the only
summand not vanishing is $l=k$, yielding the claimed equation of Theorem
\ref{mainTheorem2}.

Let us also note that (\ref{alphaConnEq}) is a consequence of Theorem
\ref{mainTheorem2}: On the one hand Theorem \ref{mainTheorem2} implies that 
\[
\alpha(2n+1;n+1,n,n,\ldots,1,1) = (-1)^{n}A_{n+1,1} = (-1)^{n} \alpha(n;1,2,\ldots,n).
\]
On the other hand -- since the penultimate row of a DMT with bottom row
$(n+1,n,n,\ldots,1,1)$ is $(n,n,n-1,n-1,\ldots,1,1)$ -- Lemma
\ref{sumOpDMT} yields
\[
\alpha(2n+1;n+1,n,n,\ldots,1,1) = (-1)^{n} \alpha(2n;n,n,n-1,n-1,\ldots,1,1).
\]

Following the sketched methodology, the proof of Theorem \ref{wniTheorem} will
consist of the following parts: In Lemma \ref{wniSymmetry} we show that the
numbers $W_{n,i}$ satisfy a certain symmetry relation by giving them a
combinatorial meaning. In Lemma \ref{wniLES} we derive that $(W_{n,i})_{i=1,\ldots,2n-1}$ is an
eigenvector of \linebreak[4] $\left(\binom{n-i}{2n-i-j}(-1)^{n+i}
\right)_{1\leq i,j \leq 2n-1}$. In Lemma \ref{eigenspaceDimLemma} it is shown that the corresponding eigenspace is one-dimensional. In Lemma \ref{conjNumbersEVLemma} we show that
$\left(\sum_{l=1}^{i} \binom{i-1}{l-1} (-1)^{n+i+l-1}
A_{n,l}\right)_{i=1,\ldots,2n-1}$ is in the same eigenspace. In Lemma
\ref{wniRecursionLemma} we show a recursion for $W_{n,1}$, which lets us then
inductively derive the constant factor.

The symmetry $A_{n,i} = A_{n+1-i}$ satisfied by the refined ASM numbers is a
direct consequence of the involution reflecting an ASM along the
horizontal symmetry axis. The idea for proving the symmetry $W_{n,i}=(-1)^{n-1}
W_{n,2n-i}$ is to give the numbers a combinatorial interpretation (as signed
enumeration) and to construct a one-to-one correspondence between the objects
enumerated by $W_{n,i}$ and those enumerated by $W_{n,2n-i}$. In this case a
sign change of $(-1)^{n-1}$ is involved.
\begin{lemma}
\label{wniSymmetry}
The numbers $W_{n,i}$ satisfy the symmetry relation
\[
W_{n,i}=(-1)^{n-1} W_{n,2n-i}
\]
for $i=1,\ldots,2n-1$, $n \geq 1$.
\end{lemma}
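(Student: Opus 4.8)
The plan is to follow the recipe indicated just above the statement: first endow $W_{n,i}$ with a meaning as a signed enumeration, and then exhibit a sign-controlled bijection between the objects counted by $W_{n,i}$ and those counted by $W_{n,2n-i}$. For the combinatorial interpretation I would start from $\Delta_{k_1}^{i-1}=(E_{k_1}-\id)^{i-1}$ together with Corollary \ref{mainProofStepOne}. Expanding the difference operator and evaluating at $k_1=n$ gives $W_{n,i}=\sum_{l=0}^{i-1}\binom{i-1}{l}(-1)^{i-1-l}\alpha(2n-1;n+l,n-1,n-1,\ldots,1,1)$, and since each bottom row $(n+l,n-1,\ldots,1,1)$ is weakly decreasing with every value occurring at most twice, Corollary \ref{mainProofStepOne} rewrites each summand as $\sum_A(-1)^{\SC(A)}$ over $\mathcal{W}_{2n-1}(n+l,n-1,\ldots,1,1)$. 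The first task is to absorb the binomial weights $\binom{i-1}{l}(-1)^{i-1-l}$ into the combinatorial picture, repackaging $W_{n,i}$ as a signed count over a single set $\mathcal{S}_{n,i}$ of DMTs of size $2n-1$ whose bottom row is $(\ast,n-1,n-1,\ldots,1,1)$ with leading entry in $\{n,\ldots,n+i-1\}$, decorated by data that records the finite-difference order $i-1$ (for instance through the increments of the left column $a_{2n-1,1}\ge a_{2n-2,1}\ge\cdots$).

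The heart of the argument is the bijection. My candidate is the reflect-and-complement map $\Phi$ sending a DMT $(a_{p,q})$ of size $N=2n-1$ to $(n-a_{p,p+1-q})$: reflecting each row and complementing the entries reverses the monotonicity twice, so $\Phi$ again satisfies weak decrease along the diagonals, and one checks directly that it preserves condition $(2)$ and the No-Single-Descendant condition (a pair maps to a pair and a newcomer to a newcomer). On bottom rows $\Phi$ turns $(n+l,n-1,\ldots,1,1)$ into $(n-1,n-1,\ldots,1,1,-l)$, after which the rotation and translation symmetries of $\alpha$, namely $\alpha(N;k_1,\ldots,k_N)=(-1)^{N-1}\alpha(N;k_N+N,k_1,\ldots,k_{N-1})$ and $\alpha(N;k_1+c,\ldots,k_N+c)=\alpha(N;k_1,\ldots,k_N)$, carry the singleton back to the front of a standard tail, sending the leading entry $n+l$ to $2n-1-l$. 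The point I would have to establish is that the effect of $\Phi$ on the difference-order decoration is precisely to send $i-1$ to $2n-1-i$, i.e. $\mathcal{S}_{n,i}$ to $\mathcal{S}_{n,2n-i}$.

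For the sign I would work with $\dd$ rather than $\SC$, using Lemma \ref{statConnLemma} to replace $(-1)^{\SC(A)}$ by $(-1)^{\binom{2n-1}{2}+\dd(A)}$, and then track how the pairs and their duplicate-descendant status transform under $\Phi$. Since $\Phi$ reverses each of the $2n-1$ rows and complements values, the net change of $\dd$ modulo $2$ should be constant over $\mathcal{S}_{n,i}$, and combining it with the sign $(-1)^{N-1}=+1$ coming from the rotation should produce exactly the factor $(-1)^{n-1}$.

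The main obstacle is the interplay between the finite-difference order and the reflection. The polynomial symmetries of $g_n(k_1):=\alpha(2n-1;k_1,n-1,\ldots,1,1)$ by themselves only yield a reflection $g_n(k_1)=\pm\,g_n(3n-1-k_1)$, which relates differences of the \emph{same} order and is therefore \emph{not} equivalent to the claimed identity $W_{n,i}=(-1)^{n-1}W_{n,2n-i}$, whose two sides are differences of orders $i-1$ and $2n-1-i$. Consequently the decoration of $\mathcal{S}_{n,i}$ must genuinely encode the order so that $\Phi$ interchanges $i-1$ with $2n-1-i$; pinning down this encoding, and simultaneously verifying that the parity of $\dd$ changes by the constant amount yielding $(-1)^{n-1}$, is where the real work lies.
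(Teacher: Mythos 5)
You have correctly located the crux, but your proposal does not resolve it, and the one concrete device you offer cannot. Your map $\Phi\colon (a_{p,q})\mapsto (n-a_{p,p+1-q})$ preserves pairs and newcomers (row $i+1$ of the image is the complemented multiset of row $i+1$ of the original), hence preserves $\SC$ exactly; consequently all it can establish is the signed-enumeration identity $\alpha(2n-1;n+l,n-1,n-1,\ldots,1,1)=\alpha(2n-1;n-1,n-1,\ldots,1,1,-l)$, which together with the rotation and translation properties of $\alpha$ is precisely the reflection $g_n(k_1)=g_n(3n-1-k_1)$ for $g_n(k_1):=\alpha(2n-1;k_1,n-1,n-1,\ldots,1,1)$ --- a combinatorial re-proof of properties $(1)$--$(3)$ already used in Lemma \ref{wniLES}. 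As you observe yourself, that symmetry relates differences of \emph{equal} order and is strictly weaker than $W_{n,i}=(-1)^{n-1}W_{n,2n-i}$. The entire content of the lemma is the order swap $i-1\leftrightarrow 2n-1-i$, and the object that is supposed to carry it in your sketch --- the ``decoration'' of $\mathcal{S}_{n,i}$ absorbing the binomial weights $\binom{i-1}{l}(-1)^{i-1-l}$ and recording the difference order --- is never defined, let alone shown to be interchanged by $\Phi$. So what you have is a plan whose decisive step is missing, and whose proposed involution is provably too weak to supply it.

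The idea the paper uses, and which your sketch lacks, is to realize $\Delta_{k_1}^{i-1}$ combinatorially instead of expanding it binomially: recursion (\ref{sumOpRec3}) gives $\Delta_{k_1}\sum_{(l_1,\ldots,l_{n-1})}^{(k_1,\ldots,k_n)}A(l_1,\ldots,l_{n-1})=-\sum_{(l_2,\ldots,l_{n-1})}^{(k_2,\ldots,k_n)}A(k_1,l_2,\ldots,l_{n-1})$, so each application of $\Delta_{k_1}$ peels off one row while the frozen entry $k_1=n$ is carried along. Iterating, $W_{n,i}$ becomes (via Lemma \ref{sumOpDMT} and Corollary \ref{mainProofStepOne}) a signed enumeration of \emph{shaped} objects: a DMT-trapezoid with $i$ rows whose bottom row is $(n-1,n-1,\ldots,1,1)$, completed upward by a DMT after an entry $n$ is prepended to the trapezoid's top row. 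The difference order is thereby turned into a geometric parameter, namely the level $2n-i$ of the special row containing the extra $n$. Translating these objects into $(2n-1)\times(n-1)$ matrices through the $2$-ASM correspondence of Theorem \ref{dmtBijection}, the bijection between the objects counted by $W_{n,i}$ and by $W_{n,2n-i}$ is simply reflection along the horizontal axis, which moves the special row from position $2n-i$ to position $i$; the factor $(-1)^{n-1}$ then falls out of bookkeeping in the column automaton (pairs correspond to edges into $\Sigma=2$, newcomers to the two edge types whose second step is $-1$, reflection swaps the two $0$-loops, and the total number of $0$-loops has the parity of $(2n-1)(n-1)\equiv n-1 \bmod 2$). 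Without an analogue of this trapezoid realization --- some structure on which an involution provably exchanges $i$ with $2n-i$ --- your argument cannot get off the ground, since on plain DMTs with bottom row $(n+l,n-1,\ldots,1,1)$ no $\SC$-compatible symmetry can see the order of the difference operator at all.
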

\begin{proof}
The summation operator's recursion (\ref{sumOpRec3}) implies that
\begin{multline*}
\Delta_{k_1} \sum_{(l_1,\ldots,l_{n-1})}^{(k_1,\ldots,k_n)}
A(l_1,\ldots,l_{n-1}) \\
= \Delta_{k_1} \left(-
\sum_{(l_2,\ldots,l_{n-1})}^{(k_2,\ldots,k_n)}
\sum_{l_1=k_2}^{k_1-1}
A(l_1,\ldots,l_{n-1})+\sum_{(l_2,\ldots,l_{n-1})}^{(k_2+1,k_3,\ldots,k_n)}
A(k_2,l_2,\ldots,l_{n-1}) \right) \\
= - \sum_{(l_2,\ldots,l_{n-1})}^{(k_2,\ldots,k_n)}
A(k_1,l_2,\ldots,l_{n-1}).
\end{multline*}
Using recursion (\ref{alphaRec}) together with Lemma \ref{sumOpDMT} shows for
weakly decreasing sequences $k_1 \geq k_2 \geq \cdots \geq k_n$ that
\[
\Delta_{k_1} \alpha(n; k_1,\ldots,k_n) = -\sum_{(l_2,\ldots,l_{n-1})\in
\mathcal{P}(k_2,\ldots,k_n)}
(-1)^{\SC(k_2,\ldots,k_n;l_2,\ldots,l_{n-1})}\alpha(n-1;
k_1,l_2,\ldots,l_{n-1}),
\]
where $\SC(k_2,\ldots,k_n; l_2,\ldots,l_{n-1})$ is the
total number of pairs and newcomers in $(l_2,\ldots,l_{n-1})$.

A \emph{DMT-trapezoid} with bottom row $(k_1,\ldots,k_n)$ and top row
$(l_1,\ldots,l_{n-i+1})$ is an array of integers $(a_{p,q})_{\substack{1 \leq p
\leq i,\\ 1 \leq q \leq n-i+p}}$ having the same properties as DMTs. Let
$\mathcal{P}_i(\mathbf{k};\mathbf{l})$ denote the set of DMT-trapezoids with
$i$ rows, bottom row $\mathbf{k}$ and top row $\mathbf{l}$. As the
$\Delta$-operator is linear and the sequence $(k_1,l_2,\ldots,l_{n-1})$ is again
weakly decreasing, we can apply induction to see that
\begin{multline}
\Delta_{k_1}^{i-1} \alpha(n;k_1,\ldots,k_n) \\
= (-1)^{i-1} \sum_{\substack{(l_1,\ldots,l_{n-i},\strap): \\ \strap \in
\mathcal{P}_i(k_2,\ldots,k_n;l_1,\ldots,l_{n-i})}} (-1)^{\SC(\strap)}
\alpha(n-i+1;k_1,l_1,l_2,\ldots,l_{n-i})
\end{multline}
holds for $1 \leq i \leq n$, where $\SC(\strap)$ denotes the total number of
sign changes in the trapezoid, i.e. the total number of pairs and newcomers without
the bottom row. Together with Corollary \ref{mainProofStepOne} we have
\begin{multline*}
W_{n,i} = (-1)^{i-1} \sum_{\substack{(l_1,\ldots,l_{2n-i-1},\strap): \\ \strap \in
\mathcal{P}_i(n-1,n-1,\ldots,1,1;l_1,\ldots,l_{2n-i-1})}} (-1)^{\SC(\strap)}
\alpha(2n-i;n,l_1,l_2,\ldots,l_{2n-i-1}) \\
= \sum_{\substack{(l_1,\ldots,l_{2n-i-1},\strap): \\ \strap \in
\mathcal{P}_i(n-1,n-1,\ldots,1,1;l_1,\ldots,l_{2n-i-1})}}
\sum_{\bigtriangleup \in \mathcal{W}_{2n-i}(n,l_1,l_2,\ldots,l_{2n-i-1})}
(-1)^{i-1+\SC(\strap)+\SC(\bigtriangleup)},
\end{multline*}
i.e. $W_{n,i}$ is a signed enumeration of DMT-trapezoids with $i$ rows,
bottom row $(n-1,n-1,\ldots,1,1)$, where in the top row an entry $n$ is
added at the left end, and completed to the top as DMT. Note that the $(2n-i)$-th row is
special: Firstly, it contains the additional entry $n$ (and this is the only
one, since the entry to the right is strictly smaller). Secondly, it contains the
same number of entries smaller than $n$ as the row above. Apart from this
difference concerning the shape of the array, the restrictions are the same as
for DMTs.

From the one-to-one correspondence with $2$-ASMs (see Theorem
\ref{dmtBijection}) it follows that these objects are in one-to-one correspondence with the following
$(2n-1)\times(n-1)$-matrices: all columns are generated by the machine in Figure \ref{dmtCol} (with length
$2n-1$), and all rows -- with the exception of the $(2n-i)$-th row -- are
generated by the machine in Figure \ref{asmCol} (with length $n-1$). The $(2n-i)$-th row is generated by
the machine in Figure \ref{asmCol} (with length $n-1$) with the modification
that the transition to the end state is at the state $\Sigma = 0$ (this
corresponds to the fact that there is no additional entry $\leq n-1$ in this
row and that this entry is missing at the left end of the row). An example can
be seen in Figure \ref{wniExample}.
\begin{figure}[ht]
\begin{center}
$
\begin{array}{ccccccccccccc}
&&&&& 2 \\
&&&& 2 && 2 \\
&&& 3 && 2 && 2 \\
&& 3 && 3 && 2 && 1 \\
& 4 && 3 && 3 && 1 && 1 \\
&& 3 && 3 && 2 && 1 && 1 \\
& 3 && 3 && 2 && 2 && 1 && 1 \\
\end{array}
$
$
\Leftrightarrow
\left(
\begin{array}{ccc}
0 & 1 & 0 \\
0 & 1 & 0 \\
0 & 0 & 1 \\
1 & -1 & 1 \\
1 & -1 & 0 \\
0 & 1 & 0 \\
0 & 1 & 0 \\
\end{array}
\right)
$
\end{center}
\caption{One of the objects enumerated by $W_{4,3}$ and its corresponding
matrix.}
\label{wniExample}
\end{figure}

The advantage of this vantage point is that the one-to-one correspondence
between the objects enumerated by $W_{n,i}$ and those enumerated by $W_{n,2n-i}$ is now
obvious, namely reflecting the corresponding matrices along the horizontal
symmetry axis.

We now need to know how $\SC(\strap)+\SC(\bigtriangleup)$ changes under this
reflection: From
the vantage point of matrices the total number of pairs
is equal to the number of times we take an edge leading into the state
$\Sigma=2$ of the column-generating machine in Figure \ref{dmtCol}. On the other hand, the
number of newcomers is equal to the number of positions in the matrix, where the
partial column sum is $1$ and the entry below is a $-1$. Hence it is given by
the total number of times we take the $(-1-1)$-edge or the $(1 -1)$-edge. Since
the $\SC$-function counts the number of newcomers and pairs without the bottom
row, it follows that $\SC(\strap)+\SC(\bigtriangleup)+n-1$ is equal to the total
number of edges taken in all columns except for the $0$-loop at the state $\Sigma=0$.

Note that reflecting the matrix along the horizontal symmetry axis means that
the columns of the reflected object are generated in reverse order, i.e. the
number of times the edges are taken is interchanged in the following way:
$(1-1) \leftrightarrow (-11)$, $0$-loop at state $\Sigma = 0$ $\leftrightarrow$
$0$-loop at state $\Sigma = 2$. Hence the parity of the difference between the
number of times we take the two $0$-loops gives us the change of
$(-1)^{\SC(\strap)+\SC(\bigtriangleup)}$ under the reflection. Since the parity
of the total number of times we take the $0$-loops is equal to the parity of the
total number of entries in the matrix (which is equal to the parity of $n-1$), we
have
\begin{multline*}
W_{n,i} = (-1)^{n-1} \sum_{\substack{(l_1,\ldots,l_{i-1},\strap): \\ \strap \in
\mathcal{P}_{2n-i}(n-1,n-1,\ldots,1,1;l_1,\ldots,l_{i-1})}}
\sum_{\bigtriangleup \in \mathcal{W}_{i}(n,l_1,l_2,\ldots,l_{i-1})}
(-1)^{2n-i-1+\SC(\strap)+\SC(\bigtriangleup)} \\
= (-1)^{n-1} W_{n,2n-i}. 
\end{multline*}
\end{proof}
\begin{lemma}
\label{wniLES}
Fix $n \geq 1$. The numbers $W_{n,i}$ satisfy the system of linear equations
\[
W_{n,i} = \sum_{k=1}^{2n-1} \binom{n-i}{2n-i-k} (-1)^{n+i} W_{n,k},
\]
for $i=1,\ldots,2n-1$.
\end{lemma}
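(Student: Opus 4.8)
The plan is to prove this system of linear equations by extracting it from the known recursion for $\alpha$, exactly the way the refined ASM theorem is reproven in \cite{FischerNewRefProof}. The target identity expresses each $W_{n,i}$ as a linear combination of \emph{all} the $W_{n,k}$, with a binomial coefficient matrix and a sign $(-1)^{n+i}$. Since $W_{n,i} = \Delta_{k_1}^{i-1} \alpha(2n-1; k_1,n-1,n-1,\ldots,1,1)|_{k_1=n}$, the natural route is to find a functional relation for $\alpha(2n-1; x, n-1,n-1,\ldots,1,1)$ as a polynomial in the single variable $x=k_1$, then apply $\Delta_{k_1}^{i-1}$ and evaluate at $k_1=n$. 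The binomial matrix $\binom{n-i}{2n-i-k}$ strongly suggests that the underlying relation is a statement about how shifting or reflecting the argument $k_1$ interacts with the $\Delta$-operator.

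First I would look for a relation of the form
\[
\Delta_{k_1}^{i-1} \alpha(2n-1; k_1,\ldots)|_{k_1=n} = (-1)^{n+i} \sum_{k=1}^{2n-1} \binom{n-i}{2n-i-k} \, \Delta_{k_1}^{k-1}\alpha(2n-1;k_1,\ldots)|_{k_1=n},
\]
which is precisely the asserted equation once both sides are named $W_{n,i}$ and $W_{n,k}$. The most promising source for such a relation is a symmetry of the polynomial $\alpha(2n-1; k_1, n-1,n-1,\ldots,1,1)$ under $k_1 \mapsto c - k_1$ for a suitable constant $c$ (one naturally expects $c$ tied to $n$), combined with the operator identity relating $\Delta_x^{j}$ evaluated after a reflection to a binomial-weighted combination of the $\Delta_x^k$ before reflection. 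Concretely, if $f(x)$ satisfies $f(c-x) = \pm f(x)$, then $\Delta_x^{i-1} f$ at a point can be re-expressed through the values $f(c-x)$, and expanding $(E_x - \mathrm{id})^{i-1}$ in terms of the backward differences of the reflected polynomial produces exactly binomial coefficients of the shape $\binom{n-i}{2n-i-k}$.

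The key technical input I would draw on is the symmetry already established in Lemma~\ref{wniSymmetry}, namely $W_{n,i} = (-1)^{n-1} W_{n,2n-i}$, together with the recursive/structural description of $\alpha$ coming from the summation operator. The plan is to translate the linear system into a statement purely about the finite vector $(W_{n,1},\ldots,W_{n,2n-1})$: I would write each $W_{n,i}$ via the forward-difference expansion $W_{n,i} = \sum_{l} \binom{i-1}{l-1}(-1)^{i-l}\,\alpha(2n-1; n-1+l, n-1,\ldots)$, use whatever reflection identity $\alpha$ enjoys in its first argument to rewrite the evaluations $\alpha(2n-1; n-1+l,\ldots)$ in reverse, and then collapse the double sum using the Chu--Vandermonde convolution (the same tool invoked earlier in the excerpt). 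The binomial coefficient $\binom{n-i}{2n-i-k}$ should emerge as the closed form of this inner convolution.

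The main obstacle will be pinning down the exact reflection symmetry of the one-variable polynomial $x \mapsto \alpha(2n-1; x, n-1,n-1,\ldots,1,1)$ and its sign. Lemma~\ref{wniSymmetry} gives a symmetry among the \emph{difference values} $W_{n,i}$, but the linear system requires the sharper statement that those values are related by a specific binomial transform, which amounts to knowing how $\alpha$ transforms under the full affine reflection of $k_1$, not merely at the level of the aggregated statistic. I expect the cleanest path is to feed the reflection of the combinatorial objects (the $(2n-1)\times(n-1)$ matrices from the proof of Lemma~\ref{wniSymmetry}) through the $\Delta$-expansion, keeping careful track of the sign $(-1)^{n+i}$; the bookkeeping of signs and the verification that the boundary terms of the summation operator vanish under the extended definition \eqref{sumExtDef} are where the real care is needed, while the binomial manipulation itself is a routine Vandermonde simplification.
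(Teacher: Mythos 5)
Your overall architecture matches the paper's proof in outline: there is indeed an affine reflection symmetry in the first argument, namely $\alpha(2n-1;k_1,n-1,n-1,\ldots,1,1)=\alpha(2n-1;3n-1-k_1,n-1,n-1,\ldots,1,1)$, and once it is available the computation you sketch goes through essentially as the paper does it — the substitution $k_1\mapsto c-k_1$ converts $\Delta_{k_1}$ into backward differences $\delta_{k_1}$ with a sign $(-1)^{i-1}$, the resulting shift is expanded as $(\id+\Delta_{k_1})^{n-i}$ (a finite sum even for $i>n$, since $\Delta$ lowers the degree of the one-variable polynomial, which is $2n-2$), giving the intermediate system $W_{n,i}=(-1)^{i-1}\sum_{k=i}^{2n-1}\binom{n-i}{k-i}W_{n,k}$, and a final substitution of $W_{n,k}=(-1)^{n-1}W_{n,2n-k}$ from Lemma \ref{wniSymmetry} yields the stated equations. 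Note that no Chu--Vandermonde convolution is needed here: the binomials $\binom{n-i}{2n-i-k}$ fall straight out of the operator expansion.

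The genuine gap is that you never establish the reflection symmetry, and the route you propose for it cannot work. You suggest obtaining it by feeding the reflection of the $(2n-1)\times(n-1)$ matrices from Lemma \ref{wniSymmetry} through the $\Delta$-expansion; but reflecting those matrices along the horizontal axis is precisely the involution that proves Lemma \ref{wniSymmetry}, and it yields only $W_{n,i}=(-1)^{n-1}W_{n,2n-i}$ — as you yourself observe, a strictly weaker statement. In fact, modulo Lemma \ref{wniSymmetry}, the asserted linear system is \emph{equivalent} to the reflection symmetry of $f(k_1)=\alpha(2n-1;k_1,n-1,n-1,\ldots,1,1)$, so any proof must import information beyond that lemma. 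The paper gets it from three polynomial identities for $\alpha$: the near-cyclic rotation $\alpha(n;k_1,\ldots,k_n)=(-1)^{n-1}\alpha(n;k_2,\ldots,k_n,k_1-n)$ (Lemma 5 of \cite{FischerNewRefProof} — the one genuinely deep external input, which your proposal never identifies), the reversal-negation $\alpha(n;k_1,\ldots,k_n)=\alpha(n;-k_n,\ldots,-k_1)$, and translation invariance; composed for size $2n-1$ these give $f(k_1)=f(3n-1-k_1)$ with sign $+1$ because $(-1)^{(2n-1)-1}=1$. Without the rotation identity your plan has no source for the symmetry it hinges on, and the remark about checking boundary terms of the summation operator under \eqref{sumExtDef} is beside the point: once the three $\alpha$-identities are in hand, the remainder is pure operator calculus on a single polynomial and the summation operator plays no role in this lemma.
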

\begin{proof}
We will make use of the following properties of the $\alpha$-polynomial
\begin{enumerate}
  \item $\alpha(n;k_1,k_2,\ldots,k_n) = (-1)^{n-1} \alpha(n;
  k_2,k_3,\ldots,k_n,k_1-n)$
  \item $\alpha(n;k_1,k_2,\ldots,k_n) = \alpha(n;-k_n,-k_{n-1},\ldots,-k_1)$
  \item $\alpha(n;k_1,k_2,\ldots,k_n) = \alpha(n;k_1+c,k_2+c,\ldots,k_n+c),
  \quad c \in \mathbb{Z}$.
\end{enumerate}
The first property has been shown in \cite{FischerNewRefProof}, Lemma $5$. The second and
third property is clear for strictly increasing sequences $k_1 < k_2 <
\cdots < k_n$. The polynomiality implies the equality for arbitrary
$(k_1,\ldots,k_n) \in \mathbb{Z}^n$. It follows that
\begin{multline*}
W_{n,i} = \Delta_{k_1}^{i-1} \alpha(2n-1;
n-1,n-1,n-2,n-2,\ldots,1,1,k_1-2n+1)|_{k_1=n}\\
= E_{k_1}^{i-n} \delta_{k_1}^{i-1} \alpha(2n-1;
n-1,n-1,n-2,n-2,\ldots,1,1,k_1-n)|_{k_1=n},
\end{multline*}
where $\delta_x f(x) := f(x)-f(x-1) = (\id - E_x^{-1}) f(x)$ = $E_x^{-1}
\Delta_x f(x)$. Using $E_x = (\id - \delta_x)^{-1}$, the Binomial Theorem and
the fact that applying the $\delta$-operator to a polynomial decreases its degree, we
have
\begin{multline*}
W_{n,i} = \sum_{j=0}^{2n-i-1} \binom{n-i}{j} (-1)^j \delta_{k_1}^{i+j-1}
\alpha(2n-1;n-1,n-1,n-2,n-2,\ldots,1,1,k_1)|_{k_1=0} \\
= \sum_{j=0}^{2n-i-1} \binom{n-i}{j} (-1)^j \delta_{k_1}^{i+j-1}
\alpha(2n-1;-k_1,-1,-1,-2,-2,\ldots,-(n-1),-(n-1))|_{k_1=0}.  
\end{multline*}
Note by induction that the operators $\delta$ and $\Delta$ satisfy the equation
\[
\delta_x^i f(-x) = (-1)^i \Delta_y^i f(y) |_{y=-x}
\]
for any function $f$ and $i\geq 0$. Hence,
\begin{multline*}
W_{n,i} = \sum_{j=0}^{2n-i-1} \binom{n-i}{j} (-1)^{i-1} \Delta_{k_1}^{i+j-1}
\alpha(2n-1;k_1,-1,-1,-2,-2,\ldots,-(n-1),-(n-1))|_{k_1=0} \\
= \sum_{j=0}^{2n-i-1} \binom{n-i}{j} (-1)^{i-1} \Delta_{k_1}^{i+j-1}
\alpha(2n-1;k_1,n-1,n-1,n-2,n-2,\ldots,1,1)|_{k_1=n} \\
= \sum_{j=0}^{2n-i-1} \binom{n-i}{j} (-1)^{i-1} W_{n,i+j} = \sum_{k=i}^{2n-1} \binom{n-i}{k-i} (-1)^{i-1} W_{n,k}.
\end{multline*}
Using the symmetry shown in Lemma \ref{wniSymmetry} yields the claimed equation:
\[
W_{n,i} = \sum_{k=i}^{2n-1} \binom{n-i}{k-i} (-1)^{n+i} W_{n,2n-k} = \sum_{k=1}^{2n-1} \binom{n-i}{2n-i-k} (-1)^{n+i} W_{n,k}.
\]
\end{proof}
\begin{lemma}
\label{eigenspaceDimLemma}
Let $\delta_{i,j}$ denote the Kronecker delta. Then
\[
\rk\left(\binom{n-i}{2n-i-j}(-1)^{n+i} - \delta_{i,j} \right)_{1\leq i,j \leq
2n-1} = 2n-2,
\]
holds for $n\geq 1$, i.e. the eigenspace of the eigenvalue $1$ of
$\left(\binom{n-i}{2n-i-j}(-1)^{n+i} \right)_{1\leq i,j \leq 2n-1}$ is one-dimensional.
\end{lemma}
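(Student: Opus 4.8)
Write $M=\left(\binom{n-i}{2n-i-j}(-1)^{n+i}\right)_{1\le i,j\le 2n-1}$, so the assertion is $\rk(M-\mathrm{Id})=2n-2$, i.e. $\dim\ker(M-\mathrm{Id})=1$. The plan is to prove the two inequalities separately. The easy half is the upper bound: since $\binom{0}{n-j}=\delta_{n,j}$, the $n$-th row of $M$ equals the standard basis vector $e_n^{T}$, so the $n$-th row of $M-\mathrm{Id}$ vanishes and $\rk(M-\mathrm{Id})\le 2n-2$. Everything then reduces to showing the nullity of $M-\mathrm{Id}$ is at most $1$.

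To do this I would exploit the ``staircase'' support of $M$: for $i<n$ the nonzero entries of row $i$ sit in columns $n\le j\le 2n-i$, for $i>n$ they sit in columns $1\le j\le 2n-i\le n-1$, and row $n$ is $e_n^{T}$. Splitting coordinates as $v=(v_U,v_n,v_D)$ with $v_U=(v_1,\dots,v_{n-1})$ and $v_D=(v_{n+1},\dots,v_{2n-1})$, the equation $Mv=v$ becomes $v_D=B\,v_U$ and $v_U=c\,v_n+C\,v_D$, where $B=(M_{i,l})_{n<i,\,l<n}$, $C=(M_{i,j})_{i<n,\,j>n}$ and $c=\big((-1)^{n+i}\big)_{i<n}$. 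Eliminating $v_D$ yields $(\mathrm{Id}-CB)\,v_U=c\,v_n$, whence
\[
\dim\ker(M-\mathrm{Id})=n-\rk\big[\,\mathrm{Id}-CB\ \big|\ c\,\big].
\]
Since the only index $j$ contributing to $(M^2)_{i,l}$ with $i,l<n$ satisfies $j>n$, one has $CB=(M^2)\big|_{\{1,\dots,n-1\}^2}$, and a single Chu--Vandermonde summation evaluates this to $(M^2)_{i,l}=(-1)^{i+l}\binom{2n-i-l-1}{n-l}$. Conjugating by $\mathrm{diag}\big((-1)^i\big)$ and rescaling columns (operations that preserve rank), the whole claim collapses to
\[
\rk\big[\,\mathrm{Id}-K\ \big|\ \mathbf 1\,\big]=n-1,\qquad K_{i,l}=\binom{2n-i-l-1}{n-l},
\]
where $\mathbf 1$ is the all-ones column.

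The hard part is this last full-row-rank statement, which fails exactly when some nonzero row vector $\lambda$ satisfies $\lambda K=\lambda$ and $\lambda\cdot\mathbf 1=0$. So it suffices to show that the left $1$-eigenspace of $K$ is at most one-dimensional and, when nonzero, is spanned by a vector $\lambda^{*}$ with $\lambda^{*}\cdot\mathbf 1\ne 0$. Reindexing by $u=n-1-i$, $v=n-1-l$ turns $K$ into the classical ballot matrix $\widehat K_{u,v}=\binom{u+v+1}{u}$ with $0\le u,v\le n-2$, and analysing its eigenvalue-$1$ behaviour is the genuine obstacle. I expect the dichotomy that $1$ is an eigenvalue of $K$ only for even $n$ (for odd $n$ already $\det(\mathrm{Id}-K)\ne0$, as in the $n=3$ case $K=\left(\begin{smallmatrix}3&2\\1&1\end{smallmatrix}\right)$), whereas for even $n$ the eigenvalue $1$ does occur and the appended column $\mathbf 1$ is genuinely needed (for $n=4$ the left $1$-eigenvector is $(-6,-3,2)$, with $\lambda^{*}\cdot\mathbf 1=-7\ne0$). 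I would settle this either by factoring $\widehat K$ through the Pascal matrix $\big(\binom{u}{v}\big)$ to read off its spectrum together with a closed form for the $1$-eigenvector, or—more robustly—by deleting a suitable column of $[\,\mathrm{Id}-K\mid\mathbf 1\,]$ and evaluating the resulting bordered determinant as an explicit nonzero product of factorials; the symmetry established in Lemma \ref{wniSymmetry} should help pin down the nonvanishing.
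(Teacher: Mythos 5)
Your reduction is correct as far as it goes, and it is organized genuinely differently from the paper's proof: after discarding the zero row you keep all $2n-1$ columns and eliminate $v_D$ using the staircase support, whereas the paper deletes row $n$ \emph{and} column $2n-1$ and evaluates the resulting $(2n-2)\times(2n-2)$ determinant $\det(S')$ outright via a Schur complement, with the block $C$ inverted explicitly by Chu--Vandermonde. I checked your intermediate steps: row $n$ of $M$ is indeed $e_n^{T}$; for $i,l<n$ the only indices contributing to $(M^2)_{i,l}$ do satisfy $j>n$, so $CB$ is the restriction of $M^2$; the evaluation $(M^2)_{i,l}=(-1)^{i+l}\binom{2n-i-l-1}{n-l}$ is right (after rewriting $\binom{n-j}{2n-j-l}$, which has negative upper index, before applying Vandermonde); and $\dim\ker(M-\mathrm{Id})=n-\rk\left[\,\mathrm{Id}-K\mid\mathbf{1}\,\right]$ is a faithful reformulation of the lemma, including the transfer of $\mathbf{1}$ under your reindexing.

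However, there is a genuine gap, and it sits exactly where you yourself locate it: the statement $\rk\left[\,\mathrm{Id}-K\mid\mathbf{1}\,\right]=n-1$ \emph{is} the lemma, and your proposal offers for it only two unexecuted strategies plus data for $n\le 4$. This step is not a routine finish. In the paper, the corresponding nonvanishing is obtained by massaging $\det(S')$ with row and column operations into $\det_{0\le i,j\le n-2}\left(\binom{i+j}{j}-\delta_{i,j+1}\right)$ and then invoking Behrend's theorem that this determinant counts descending plane partitions, giving $\det(S')=(-1)^{n-1}A_{n-1}\neq 0$; that is, the nonvanishing ultimately rests on a deep known determinant evaluation. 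Your hoped-for ``explicit nonzero product of factorials'' for a bordered determinant of $\left[\,\mathrm{Id}-K\mid\mathbf{1}\,\right]$ would have to encode a result of the same calibre -- $A_{n-1}$ is indeed such a product, but establishing the evaluation is the hard part -- and nothing in your outline produces it (note your matrix $\binom{u+v+1}{u}$ is a close cousin of, but not equal to, the paper's final matrix, so even landing on Behrend's determinant would require further manipulations you have not supplied). Your route also incurs a case split that the paper's single uniform determinant avoids: as your own $n=4$ computation shows, $1$ is an eigenvalue of $K$ for even $n$, so $\det(\mathrm{Id}-K)$ vanishes there and you must additionally prove that the left $1$-eigenspace is one-dimensional with eigenvector not orthogonal to $\mathbf{1}$, for which you have a single data point and no argument; the idea of reading off the spectrum through the Pascal matrix is speculative, since $\binom{u+v+1}{u}$ is not symmetric and its eigenvalue-$1$ structure is not classical. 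As it stands, the proposal is a valid and clean reformulation of the lemma, not a proof of it.
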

\begin{proof}
Note that the $n$-th row of $S:=\left(\binom{n-i}{2n-i-j}(-1)^{n+i} -
\delta_{i,j} \right)_{1 \leq i,j \leq 2n-1}$ contains only $0$-entries. In the following, it
will be shown that removing this row and the last column results in a $(2n-2)\times(2n-2)$-matrix $S'$ with non-zero determinant (in fact, it is equal to $(-1)^{n-1} A_{n-1}$, where $A_{n-1}$ denotes the
number of ASMs of size $n-1$). The block structure of $S'$ is displayed in
Figure \ref{matrixBlockStructure}.
\begin{figure}[ht]
$
\begin{array}{c|c|c|}
\multicolumn{3}{c}{\hfill j=1,\ldots,2n-2 \qquad\qquad\qquad\qquad} \\
\cline{2-3}
i=1,\ldots,n-1 & \begin{array}{ccccc}-1 & 0 && \cdots & 0 \\
 0 & -1 &\ddots&& \vdots \\ \vdots &\ddots&\ddots&& \\ &&& -1 & 0 \\ 0 && \cdots
 &0& -1 \end{array} & \binom{n-i}{2n-i-j}(-1)^{n+i} \\
\cline{2-3}
i=n+1,\ldots,2n-1 & \binom{n-i}{2n-i-j}(-1)^{n+i} & \begin{array}{ccccc}0 & -1
&& \cdots & 0 \\ & 0 &\ddots&& \vdots \\ \vdots &&\ddots&& \\ &&& 0 & -1 \\ 0 && \cdots && 0
 \end{array} \\
 \cline{2-3}
\end{array}
$
\caption{The matrix $S'$ decomposed into four $(n-1)\times(n-1)$-blocks.
\label{matrixBlockStructure}}
\end{figure}
The determinant of
a block matrix $\block{$A$}{$B$}{$C$}{$D$}$ with an invertible matrix $A$
and a square matrix $D$ is equal to $\det(A)\det(D-CA^{-1}B)$: This follows from the
decomposition 
\[
\block{$A$}{$B$}{$C$}{$D$} = \block{$A$}{$0$}{$C$}{$I$}
\block{$I$}{$A^{-1}B$}{$0$}{$D-CA^{-1}B$}
\]
together with the fact that
\[
\det \block{$A$}{$0$}{$C$}{$D$} = \det \block{$A$}{$B$}{$0$}{$D$} =
\det (A) \det (D).
\]
The block matrices in our case are
\begin{align*}
A &= -I_{n-1}, \\
B&=\left(
\binom{n-i}{n-i-j+1}(-1)^{n+i} \right)_{1 \leq i,j \leq n-1}, \\
C&=\left(\binom{-i}{n-i-j}(-1)^i\right)_{1\leq i,j \leq n-1}, \\
D &= \left(-\delta_{i+1,j} \right)_{1 \leq i,j \leq n-1}.
\end{align*}
Applying Chu-Vandermonde summation shows that the matrix $C$ is invertible with
inverse $C^{-1}=\left( \binom{i-n}{i+j-n} (-1)^{n-i} \right)_{1 \leq i,j \leq
n-1}$.
It follows that
\[
\det(S') = \det(A) \det(C) \det(C^{-1}D - A^{-1}B). 
\]
The determinant of $A$ is obviously equal to $(-1)^{n-1}$. An easy calculation
shows that the determinant of $C$ is as well equal to $(-1)^{n-1}$. As $A^{-1} =
-I_{n-1}$ we further have
\begin{multline*}
\det(S') = \det(C^{-1}D + B) \\
= \det_{1 \leq i,j \leq n-1} \left( \binom{i-n}{i+j-1-n} (-1)^{n+i+1} +
\binom{n-i}{n-i-j+1}(-1)^{n+i} \right).
\end{multline*}
Using the identity $\binom{n}{k} = \binom{n}{n-k}$ -- which is true for arbitrary
$k \in \mathbb{Z}$, whenever $n \in \mathbb{Z}^+$ -- the second
binomial coefficient is equal to $\binom{n-i}{j-1}$. Multiplying the $i$-th row with $(-1)^{n+i}$ yields
\[
\det(S') = (-1)^{\binom{n}{2}} \det_{1 \leq i,j \leq n-1} \left(
\binom{n-i}{j-1}-\binom{i-n}{i+j-1-n} \right).
\]
Switching row $i$ with row $n-i$ for $i=1,\ldots,\floor{\frac{n-1}{2}}$ and
noting that $(-1)^{\binom{n}{2} + \floor{\frac{n-1}{2}}} = (-1)^{n-1}$ gives
\[
\det(S') = (-1)^{n-1} \det_{1 \leq i,j \leq n-1} \left(
\binom{i}{j-1}-\binom{-i}{j-i-1} \right).
\]
Multiplying from the right with the upper-triangular matrix
$\left(\binom{j-2}{j-i}\right)_{1\leq i,j \leq n-1}$ having determinant $1$ and
using Chu-Vandermonde convolution yields
\begin{multline*}
\det(S') = (-1)^{n-1} \det_{1 \leq
i,j \leq n-1} \left(\sum_{k=1}^{n-1}
\left(\binom{i}{k-1}-\binom{-i}{k-i-1}\right)\binom{j-2}{j-k} \right) \\
=(-1)^{n-1} \det_{1 \leq i,j \leq n-1}
\left(\binom{i+j-2}{j-1}-\binom{j-i-2}{j-i-1} \right)\\
=(-1)^{n-1} \det_{0 \leq i,j \leq n-2}
\left(\binom{i+j}{j}-\delta_{i,j+1} \right). \\
\end{multline*}
In \cite{BehrendWeightedEnum} it has been shown that the determinant of
$\left(\binom{i+j}{j}-\delta_{i,j+1} \right)_{0 \leq i,j \leq n-1}$ is equal to the
number of descending plane partitions with each part smaller than or equal to
$n$, which is known to be equal to the number of ASMs of size $n$. It follows that
\[
\det(S')=(-1)^{n-1} A_{n-1} \neq 0.
\]
\end{proof}
\begin{lemma}
\label{conjNumbersEVLemma}
Let $X_{n,i}:=\sum_{j=1}^{i}
\binom{i-1}{j-1}(-1)^{n+i+j-1}A_{n,j}$. Then
\[
\sum_{k=1}^{2n-1} \binom{n-i}{2n-i-k}(-1)^{n+i} X_{n,k} = X_{n,i}
\]
holds for all $i=1,\ldots,2n-1$, $n\geq 1$.
\end{lemma}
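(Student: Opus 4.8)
The plan is to convert the eigenvector equation, which on its face is a statement about the refined ASM numbers, into a statement about a single $\alpha$-polynomial, and then to exploit the three symmetries of $\alpha$ listed in the proof of Lemma \ref{wniLES}. First I would use the relation $A_{n,j}=(-1)^{j-1}\Delta_{k_1}^{j-1}\alpha(n;k_1,2,3,\ldots,n)|_{k_1=2}$ recalled in the methodology together with $E_{k_1}=\id+\Delta_{k_1}$ to collapse the defining sum of $X_{n,i}$. Writing $\phi(x):=\alpha(n;x,2,3,\ldots,n)$, the two sign factors combine to $(-1)^{n+i}$, and the Binomial Theorem gives
\[
X_{n,i}=(-1)^{n+i}\sum_{m=0}^{i-1}\binom{i-1}{m}\Delta_{k_1}^{m}\phi(k_1)|_{k_1=2}=(-1)^{n+i}E_{k_1}^{i-1}\phi(k_1)|_{k_1=2}=(-1)^{n+i}\phi(i+1),
\]
valid for all $i$ (and $A_{n,l}=0$ for $l>n$ is automatic since $\deg\phi=n-1$). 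This is the key reduction: the claimed identity becomes the purely polynomial relation
\[
\phi(i+1)=(-1)^{n}\sum_{k=1}^{2n-1}\binom{n-i}{2n-i-k}(-1)^{k}\phi(k+1).
\]

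Next I would establish the reflection symmetry $\phi(x)=(-1)^{n-1}\phi(2n+2-x)$, equivalently $X_{n,i}=(-1)^{n-1}X_{n,2n-i}$. This is obtained exactly as in Lemma \ref{wniSymmetry}: apply property $(1)$ of $\alpha$ to move the first argument to the last slot as $x-n$, then the reflection property $(2)$, then the translation property $(3)$ with shift $n+2$, after which the tail $(2,3,\ldots,n)$ is reproduced and the head has become $2n+2-x$. I regard this as the heart of the argument, since — as one checks on small cases — the bare symmetry $A_{n,l}=A_{n,n+1-l}$ of the refined numbers is \emph{not} by itself enough to force the eigenvector equation; the genuine input is this symmetry of $\phi$, which is where the arithmetic of $\alpha$ (and hence of ASMs) really enters.

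With both ingredients in hand I would finish as follows, treating $1\le i\le n$ first. Reindexing $k\mapsto 2n-i-k$ rewrites the right-hand side of the polynomial relation as $(-1)^{n+i}\delta_x^{\,n-i}\phi(x)|_{x=2n+1-i}$, where $\delta_x f(x)=f(x)-f(x-1)$ is the backward difference of Lemma \ref{wniLES}; applying the symmetry of $\phi$ term by term converts this into $(-1)^{n+1}\Delta_x^{\,n-i}\phi(x)|_{x=i+1}$. Thus the relation collapses to
\[
\Delta_x^{\,n-i}\phi(x)|_{x=i+1}=(-1)^{n+1}\phi(i+1).
\]
Expanding $\phi$ in the Newton basis at $x=2$, so that $\phi(x)=\sum_{m}(-1)^{m}A_{n,m+1}\binom{x-2}{m}$, both sides become sums of $A_{n,l}$ against binomial coefficients with upper index $i-1$, and the identity follows from the reflection $\binom{i-1}{m}=\binom{i-1}{i-1-m}$ together with the refined ASM symmetry $A_{n,l}=A_{n,n+1-l}$, which makes the Newton coefficients palindromic up to the factor $(-1)^{n-1}$. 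Finally, the range $n<i\le 2n-1$ is recovered from the case $i\le n$ by the symmetry $X_{n,i}=(-1)^{n-1}X_{n,2n-i}$ established in the second step.

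The step I expect to be the main obstacle is the symmetry of $\phi$: getting the composition of properties $(1)$, $(2)$, $(3)$ to land back exactly on the tail $(2,3,\ldots,n)$ with the correct overall sign $(-1)^{n-1}$ demands the same careful bookkeeping as Lemma \ref{wniSymmetry}, and it is precisely this step (not any binomial manipulation) that carries the nontrivial content. The remaining pieces — the operator collapse in the first step, the backward-to-forward difference conversion, and the Newton-basis identity — are routine once the two symmetries are available.
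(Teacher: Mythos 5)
Your reduction is sound as far as it goes, and it is a genuinely different route from the paper's own proof: the paper substitutes the product formula $A_{n,j}=\binom{n+j-2}{n-1}\binom{2n-j-1}{n-1}c_n$ and verifies the eigenvector equation by Chu--Vandermonde manipulations after multiplying both sides by the invertible matrix $\left(\binom{i-1}{j-1}(-1)^j\right)$, whereas you avoid the Refined ASM Theorem's closed form and work with $\phi(x)=\alpha(n;x,2,\ldots,n)$ directly. I checked your key steps for $1\le i\le n$: the collapse $X_{n,i}=(-1)^{n+i}\phi(i+1)$ (with $A_{n,l}=0$ for $l>n$ forced by $\deg\phi\le n-1$), the symmetry $\phi(x)=(-1)^{n-1}\phi(2n+2-x)$ from properties $(1)$--$(3)$, the conversion to $(-1)^{n+1}\Delta_x^{n-i}\phi(x)|_{x=i+1}$, and the Newton-basis verification via $\binom{i-1}{l-1-n+i}=\binom{i-1}{n-l}$ and $A_{n,l}=A_{n,n+1-l}$ are all correct.

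The gap is your final sentence: the range $n<i\le 2n-1$ is \emph{not} recovered from the case $i\le n$ by the symmetry. Applying $X_{n,m}=(-1)^{n-1}X_{n,2n-m}$ to every term of the claimed equation at an index $i>n$ and reversing $k\mapsto 2n-k$ yields, with $j=2n-i$,
\[
X_{n,j}=(-1)^{n+j}\sum_{k=1}^{2n-1}\binom{j-n}{k+j-2n}X_{n,k},
\]
whose coefficients have negative upper index and are supported on $k\ge 2n-j$, while the equation you actually proved at index $j$ has coefficients $\binom{n-j}{2n-j-k}$ supported on $n\le k\le 2n-j$. These are different linear identities: the reflection $\binom{n-i}{2n-i-k}=\binom{n-i}{k-n}$, which would make the coefficient matrix reversal-invariant, is only valid when $n-i\ge 0$. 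Concretely, for $n=3$ the vector $(X_1,\ldots,X_5)=(0,1,-2,1,0)$ satisfies the symmetry and the equations for $i=1,2,3$, yet at $i=4$ the right-hand side is $(-1)^{7}\bigl(\binom{-1}{1}X_1+\binom{-1}{0}X_2\bigr)=-1\neq 1=X_4$; hence no formal derivation of the $i>n$ cases from the $i\le n$ cases plus symmetry can exist. Polynomiality does not rescue the step automatically either: since both sides are polynomials of degree at most $n-1$ in $i$ once one writes $\binom{n-i}{2n-i-k}=\binom{n-i}{k-n}$ (legitimate for $i\le n$), the unique polynomial continuation of your proven cases is $\phi(i+1)=\sum_{k=n}^{2n-1}\binom{n-i}{k-n}X_{n,k}$, which differs from the asserted equation precisely when $i>n$. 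To close the hole you must treat $i>n$ by a separate computation, e.g.\ write $\binom{n-i}{2n-i-k}=(-1)^{i+k}\binom{n-k-1}{i-n-1}$, reduce the claim to $\phi(i+1)=(-1)^{n+i}\sum_{k=1}^{2n-i}\binom{n-k-1}{i-n-1}\phi(k+1)$, and expand in the Newton basis using $\sum_{k}\binom{n-k-1}{i-n-1}\binom{k-1}{m}=\binom{n-1}{m+i-n}$ -- a computation parallel to, but not a consequence of, your $i\le n$ case, and one where (unlike for $i\le n$) a single application of the symmetry of $\phi$ followed by palindromicity of the $A_{n,l}$ is no longer sufficient.
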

\begin{proof}
The refined ASM-numbers $A_{n,j}$ are equal to
\[
A_{n,j}=\binom{n+j-2}{n-1}\frac{(2n-j-1)!}{(n-j)!} \prod_{k=0}^{n-2}
\frac{(3k+1)!}{(n+k)!} = \binom{n+j-2}{n-1}\binom{2n-j-1}{n-1} c_n
\]
with a constant $c_n$ independent of $j$. Hence, it suffices to show that
\begin{multline*}
\sum_{k=1}^{2n-1}\binom{n-i}{2n-i-k} \sum_{j=1}^k
\binom{k-1}{j-1}\binom{n+j-2}{n-1}\binom{2n-j-1}{n-1} (-1)^{i+j+k-1} \\=
\sum_{j=1}^{2n-1}
\binom{i-1}{j-1}\binom{n+j-2}{n-1}\binom{2n-j-1}{n-1}(-1)^{n+i+j-1}.
\end{multline*}
Using $\binom{n}{k} = \binom{n}{n-k}$, $\binom{a}{b} =
\binom{b-a-1}{b} (-1)^b$ and Chu-Vandermonde convolution shows that the
left-hand side is equal to
\begin{multline*}
\sum_{j=1}^{2n-1} \binom{n+j-2}{n-1}\binom{2n-j-1}{n-1} (-1)^{i+j-1}
\sum_{k=j}^{2n-1} \binom{n-i}{2n-i-k} \binom{k-1}{j-1} (-1)^k \\
= \sum_{j=1}^{2n-1} \binom{n+j-2}{n-1}\binom{2n-j-1}{n-1} (-1)^{i-1}
\sum_{k=j}^{2n-1} \binom{n-i}{2n-i-k} \binom{-j}{k-j} \\
= \sum_{j=1}^{2n-1} \binom{n+j-2}{n-1}\binom{2n-j-1}{n-1} \binom{n-i-j}{2n-i-j}
(-1)^{i-1}.
\end{multline*}
Hence, we have to show that
\begin{multline*}
\left(\sum_{j=1}^{2n-1} \binom{n+j-2}{n-1}\binom{2n-j-1}{n-1}
\binom{n-i-j}{2n-i-j} \right)_{i=1,\ldots,2n-1} \\= \left( \sum_{j=1}^{2n-1}
\binom{i-1}{j-1}\binom{n+j-2}{n-1}\binom{2n-j-1}{n-1}(-1)^{n+j}
\right)_{i=1,\ldots,2n-1}.
\end{multline*}
Multiplying with the invertible matrix $T:=\left( \binom{i-1}{j-1} (-1)^j
\right)_{1 \leq i,j \leq 2n-1}$ from the left transforms the right-hand side
into
\begin{multline*}
\left( \sum_{k=1}^{2n-1} \sum_{j=1}^{2n-1}
\binom{k-1}{j-1}\binom{n+j-2}{n-1}\binom{2n-j-1}{n-1}(-1)^{n+j}
\binom{i-1}{k-1}(-1)^k \right)_{i=1,\ldots,2n-1} \\
= \left( \sum_{j=1}^{2n-1}
\binom{n+j-2}{n-1}\binom{2n-j-1}{n-1} (-1)^{n} \sum_{k=1}^{2n-1}
\binom{-j}{k-j} \binom{i-1}{i-k} \right)_{i=1,\ldots,2n-1} \\
= \left( \sum_{j=1}^{2n-1}
\binom{n+j-2}{n-1}\binom{2n-j-1}{n-1} (-1)^{n} \binom{i-j-1}{i-j}
\right)_{i=1,\ldots,2n-1} \\
= \left( \binom{n+i-2}{n-1}\binom{2n-i-1}{n-1} (-1)^{n}
\right)_{i=1,\ldots,2n-1}, \\
\end{multline*}
where the last equality is due to $\binom{k-1}{k} = \delta_{k,0}$,
$k\in\mathbb{Z}$. After multiplying with matrix $T$ from the left, the left-hand
side is equal to
\begin{multline*}
\left(\sum_{k=1}^{2n-1} \sum_{j=1}^{2n-1} \binom{n+j-2}{n-1}\binom{2n-j-1}{n-1}
\binom{n-k-j}{2n-k-j} \binom{i-1}{k-1} (-1)^k \right)_{i=1,\ldots,2n-1} \\
= \left(\sum_{j=1}^{2n-1} \binom{n+j-2}{n-1}\binom{2n-j-1}{n-1} (-1)^j
\sum_{k=1}^{2n-1} \binom{n-1}{2n-k-j} \binom{i-1}{k-1}
\right)_{i=1,\ldots,2n-1} \\
= \left(\sum_{j=1}^{2n-1} \binom{n+j-2}{n-1}\binom{2n-j-1}{n-1}
\binom{n+i-2}{2n-j-1} (-1)^j \right)_{i=1,\ldots,2n-1}. \\
\end{multline*}
Note that
$
\binom{n+j-2}{n-1}\binom{2n-j-1}{n-1} \binom{n+i-2}{2n-j-1} =
\binom{n+j-2}{j-1}\binom{i-1}{n-j}\binom{n+i-2}{n-1}
$
holds for $i,j = 1,\ldots,2n-1$. The left-hand side is hence equal to
\begin{multline*}
\left( - \binom{n+i-2}{n-1} \sum_{j=1}^{2n-1}
\binom{-n}{j-1}\binom{i-1}{n-j} \right)_{i=1,\ldots,2n-1} 
= \left( - \binom{n+i-2}{n-1} \binom{i-n-1}{n-1} \right)_{i=1,\ldots,2n-1} 
\\
= \left(\binom{n+i-2}{n-1} \binom{2n-i-1}{n-1} (-1)^n
\right)_{i=1,\ldots,2n-1}.
\end{multline*}
\end{proof}
\begin{lemma}
\label{wniRecursionLemma}
The numbers $W_{n,i}$ satisfy the equation
\[
W_{n,1} = -\sum_{i=1}^{n-1} \binom{n-1}{i} W_{n-1,i}
\]
for all $n \geq 2$.
\end{lemma}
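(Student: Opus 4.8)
The plan is to reduce both sides to evaluations of $\alpha$ and to match them by peeling off rows with the summation operator.

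I would first rewrite the right-hand side. Since $W_{n-1,i}=\Delta_{k_1}^{i-1}\alpha(2n-3;k_1,n-2,n-2,\ldots,1,1)|_{k_1=n-1}$, the binomial weights can be absorbed into a shift operator: expanding $E_{k_1}=\id+\Delta_{k_1}$ and using the hockey-stick identity $\sum_{s=j}^{n-2}\binom{s}{j}=\binom{n-1}{j+1}$ gives the operator identity
\[
\sum_{s=0}^{n-2}E_{k_1}^{s}=\sum_{s=0}^{n-2}(\id+\Delta_{k_1})^{s}=\sum_{j=0}^{n-2}\binom{n-1}{j+1}\Delta_{k_1}^{j}=\sum_{i=1}^{n-1}\binom{n-1}{i}\Delta_{k_1}^{i-1}.
\]
Applying this to $\phi(k_1):=\alpha(2n-3;k_1,n-2,n-2,\ldots,1,1)$ and evaluating at $k_1=n-1$ turns the claim into
\[
\alpha(2n-1;n,n-1,n-1,\ldots,1,1)=-\sum_{m=n-1}^{2n-3}\alpha(2n-3;m,n-2,n-2,\ldots,1,1),
\]
which is the identity I would then establish.

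For the left-hand side I would set $\psi(x):=\alpha(2n-1;x,n-1,n-1,n-2,n-2,\ldots,1,1)$, so that $W_{n,1}=\psi(n)$. Because its first three arguments coincide, $\psi(n-1)=\alpha(2n-1;n-1,n-1,n-1,\ldots,1,1)=0$ by the vanishing result for three consecutive equal entries (the first Lemma of Section \ref{sectionDMT}); hence $W_{n,1}=\psi(n)-\psi(n-1)=\Delta_{x}\psi(x)|_{x=n-1}$. Now I apply the identity $\Delta_{k_1}\alpha(N;k_1,\ldots,k_N)=-\sum_{(l_2,\ldots,l_{N-1})}^{(k_2,\ldots,k_N)}\alpha(N-1;k_1,l_2,\ldots,l_{N-1})$ obtained in the proof of Lemma \ref{wniSymmetry}, and peel the first summation variable with (\ref{sumOpRec3}); since the first two upper entries are both $n-1$, the range $\sum_{l_2=n-1}^{n-2}$ is empty and only the term $l_2=n-1$ survives, giving
\[
W_{n,1}=-\sum_{(l_3,\ldots,l_{2n-2})}^{(n,n-2,n-2,\ldots,1,1)}\alpha(2n-2;n-1,n-1,l_3,\ldots,l_{2n-2}).
\]

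It remains to collapse this onto the reduced right-hand side. Here I would lower the size of the summand by one using property (1) from the proof of Lemma \ref{wniLES}, namely $\alpha(2n-2;n-1,n-1,l_3,\ldots,l_{2n-2})=-\alpha(2n-2;n-1,l_3,\ldots,l_{2n-2},1-n)$, and then resolve the remaining summation operator by peeling its variables from the right with (\ref{sumOpRec}), each time using the extended summation convention (\ref{sumExtDef}) to rewrite the resulting ``backwards'' range as a single evaluation. The boundary terms produced are exactly the values $\alpha(2n-3;m,n-2,n-2,\ldots,1,1)$, $m=n-1,\ldots,2n-3$; the symmetry and translation properties (2)--(3) of Lemma \ref{wniLES} are used to bring the intermediate evaluations into this normal form, and the vanishing result on triple entries kills the spurious terms (for $n=3$, for instance, the operator degenerates to the single value $-\alpha(4;2,2,1,1)$, and $\alpha(4;2,2,1,1)=-\alpha(3;2,1,1)-\alpha(3;2,2,1)$ with $\alpha(3;2,2,1)=\alpha(3;3,1,1)$).

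The hard part is exactly this final collapse: the forced entries (such as $1-n$) sit far below the remaining bottom row, so every peeling step runs through the extended-summation rule (\ref{sumExtDef}), and one must track the cancellations carefully to see that the nested operators assemble into the single telescoping sum over $m$ with the correct overall sign. Once that bookkeeping is carried out, comparison with the reduced right-hand side finishes the proof.
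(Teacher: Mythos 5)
Your preparatory reductions are all sound, and they check out numerically. The hockey-stick rewriting of the right-hand side as $-\sum_{m=n-1}^{2n-3}\alpha(2n-3;m,n-2,n-2,\ldots,1,1)$ is exactly the paper's final step run in reverse, and your derivation of
\[
W_{n,1}=-\sum_{(l_3,\ldots,l_{2n-2})}^{(n,n-2,n-2,\ldots,1,1)}\alpha(2n-2;n-1,n-1,l_3,\ldots,l_{2n-2})
\]
via the triple-entry vanishing lemma, the $\Delta_{k_1}$-identity from the proof of Lemma \ref{wniSymmetry}, and one application of (\ref{sumOpRec3}) is correct; for $n=3$ it gives $W_{3,1}=\alpha(4;2,2,1,1)=2$, consistent with the paper's first step $W_{n,1}=(-1)^{n-1}\alpha(2n-2;n-1,n-1,n-2,n-2,\ldots,1,1)$, which the paper obtains in one stroke from Lemma \ref{sumOpDMT} because the penultimate row under $(n,n-1,n-1,\ldots,1,1)$ is completely forced.

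However, the proof stops exactly where the lemma lives. The final collapse of the nested summation operator onto the one-parameter sum over $m$ is asserted, not proven: ``peel the variables from the right with (\ref{sumOpRec}), track the cancellations, the boundary terms produced are exactly $\alpha(2n-3;m,n-2,\ldots,1,1)$'' is a description of a hoped-for computation, and you say yourself it is the hard part. After your cyclic shift places the entry $1-n$ at the rear -- far below the remaining bottom row, so that every peeling step runs through (\ref{sumExtDef}) -- there is no argument that the nested extended sums telescope, nor that the spurious terms vanish; indeed your $n=3$ example is not an instance of the proposed peeling but a hand-tailored reduction that itself quietly invokes the rotation property (the identity $\alpha(3;2,2,1)=\alpha(3;3,1,1)$ is property (1) plus translation: $\alpha(3;2,2,1)=\alpha(3;4,2,2)=\alpha(3;3,1,1)$). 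The missing idea -- the crux of the paper's proof -- is to rotate \emph{before} expanding penultimate rows: by property (1), $\alpha(2n-2;n-1,n-1,\ldots,1,1)=-\alpha(2n-2;2n-1,n-1,n-1,\ldots,2,2,1)$, and for this rotated bottom row a single application of Lemma \ref{sumOpDMT} shows the admissible penultimate rows are $(l,n-1,n-1,\ldots,2,2)$ with exactly one free parameter $l\in\{n,\ldots,2n-2\}$ and uniform sign $(-1)^{n-1}$ (the $n-2$ pairs plus the newcomer $l$), yielding $W_{n,1}=-\sum_{l=n}^{2n-2}\alpha(2n-3;l,n-1,n-1,\ldots,2,2)$ at once; translating by $-1$ then meets your reduced right-hand side. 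Without this step, or an actually executed substitute for the deferred bookkeeping, your proposal is a correct reduction of the lemma to its essential content rather than a proof of it.
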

\begin{proof}
Using Lemma
\ref{sumOpDMT}
and $\alpha(n;k_1,\ldots,k_n)=(-1)^{n-1}\alpha(n;k_n+n,k_1,\ldots,k_{n-1})$
(compare to the first property of $\alpha$ in the proof of Lemma \ref{wniLES}),
we have
\begin{multline*}
W_{n,1} = \alpha(2n-1; n, n-1,n-1,\ldots,1,1) = (-1)^{n-1} \alpha(2n-2; n-1,n-1,\ldots,1,1) \\
= (-1)^n \alpha(2n-2; 2n-1,n-1,n-1,\ldots,2,2,1).
\end{multline*}
A DMT with bottom row $(2n-1,n-1,n-1,\ldots,2,2,1)$ has the penultimate row
$(l,n-1,n-1,\ldots,2,2)$, where $l$ may take any value in
$\{n,n+1,\ldots,2n-2\}$. Filling in the penultimate row causes one sign-change
for each pair $\{(2,2),(3,3),\ldots,(n-1,n-1)\}$ and one sign-change for the
entry $l$ being a newcomer.
Together with the binomial identity $\sum\limits_{k=n}^m \binom{k}{n} =
\binom{m+1}{n+1}$, it follows that
\begin{multline*}
W_{n,1} = - \sum_{l=n}^{2n-2} \alpha(2n-3;l,n-1,n-1,\ldots,2,2) \\
= -\sum_{l=n-1}^{2n-3} \alpha(2n-3;l,n-2,n-2,\ldots,1,1) \\
= -\sum_{l=n-1}^{2n-3} (\Delta_{k_1}+\id)^{l-n+1}
\alpha(2n-3;k_1,n-2,n-2,\ldots,1,1)|_{k_1=n-1} \\
= -\sum_{l=n-1}^{2n-3} \sum_{i = 0}^{l-n+1} \binom{l-n+1}{i} \Delta_{k_1}^i
\alpha(2n-3;k_1,n-2,n-2,\ldots,1,1)|_{k_1=n-1} \\
= -\sum_{i = 0}^{n-2} W_{n-1,i+1} \sum_{l=n+i-1}^{2n-3} \binom{l-n+1}{i} =
-\sum_{i = 1}^{n-1} \binom{n-1}{i} W_{n-1,i}.
\end{multline*}
\end{proof}
We are now in the position to conclude the proof of Theorem \ref{wniTheorem}:
For this, let us prove by induction on $n$, that $X_{n,i} = W_{n,i}$ for all $i =
1,\ldots,2n-1$, $n \geq 1$. As both $(X_{n,i})_{i=1,\ldots,2n-1}$ and $(W_{n,i})_{i=1,\ldots,2n-1}$ are in the same
one-dimensional eigenspace for fixed $n \geq 1$, it follows that $(X_{n,i})_{i=1,\ldots,2n-1} = C_n
(W_{n,i})_{i=1,\ldots,2n-1}$. Hence, it suffices to show that $X_{n,1} =
W_{n,1}$. 

The case $n=1$ is trivial. For $n \geq 2$, apply Lemma
\ref{wniRecursionLemma} and the induction hypothesis in order to get
\begin{multline*}
W_{n,1} = -\sum_{i=1}^{n-1} \binom{n-1}{i} W_{n-1,i} = -\sum_{i=1}^{n-1} \binom{n-1}{i} X_{n-1,i} \\
= -\sum_{i=1}^{n-1} \binom{n-1}{i} \sum_{j=1}^{i}
\binom{i-1}{j-1}(-1)^{n+i+j}A_{n-1,j} \\
= (-1)^{n-1}
\sum_{j=1}^{n-1} A_{n-1,j} \sum_{i=j}^{n-1} \binom{n-1}{i}
\binom{i-1}{j-1}(-1)^{i+j}.
\end{multline*}
Applying Vandermonde convolution to the inner sum shows that it is equal to $1$
for all $j=1,\ldots,n-1$. Hence we have
\[
W_{n,1} = (-1)^{n-1} \sum_{j=1}^{n-1} A_{n-1,j} = (-1)^{n-1} A_{n,1} = X_{n,1}.
\]

\section{\sts Towards a bijective proof and further research}
\label{sectionOutlook}

By Theorem \ref{mainTheorem}, we have
\[
\alpha(2n; n,n,n-1,n-1,\ldots,1,1) = (-1)^{\binom{2n}{2}} \sum_{A \in \mathcal{W}_{2n}(n,n,n-1,n-1,\ldots,1,1)} (-1)^{\dd(A)}
\]
Since there are exactly $n$ pairs in the bottom row, and $\binom{2n}{2}$ has the
same parity as $n$, it follows that proving equation (\ref{alphaConnEq}) is
equivalent to showing that
\begin{equation}
\label{conjBij}
\sum_{A \in \mathcal{W}_{2n}(n,n,n-1,n-1,\ldots,1,1)} (-1)^{\overline{dd}(A)} =
\alpha(n;1,2,\ldots,n), \quad n \geq 1,
\end{equation}
where $\overline{dd}(A)$ is the number of pairs $(x,x)$, for which there also exists
a pair $(x,x)$ in the row below. 

A bijective proof of (\ref{conjBij}) could succeed by partitioning
$\mathcal{W}_{2n}(n,n,n-1,n-1,\ldots,1,1)$ into three sets $S_1$,$S_2$ and
$S_3$, such that all elements of $S_1$ as well as $S_2$ have even
$\overline{dd}$-parity, whereas all elements of $S_3$ have odd
$\overline{dd}$-parity. The elements of $S_1$ should be in bijective
correspondence with Monotone Triangles with bottom row
$(1,2,\ldots,n)$, and the elements of $S_2$ and $S_3$ should be in bijective
correspondence.

It is plausible that the subset $S_1 \subseteq
\mathcal{W}_{2n}(n,n,n-1,n-1,\ldots,1,1)$ corresponding to the Monotone
Triangles, is given by those DMT with bottom row $(n,n,n-1,n-1,\ldots,1,1)$,
where the $(2i)$-th row consists of $i$ pairs for all $i = 1,\ldots,n$ (see
Figure \ref{dmt-mt-structure}). Note that this also determines the entries in
odd rows. Identifying the entries connected by an edge with one single entry and
reflecting the triangle along the vertical symmetry axis, yields the one-to-one
correspondence with Monotone Triangles: The condition of DMTs that each row
contains an integer at most twice corresponds to the strict monotony along rows
in Monotone Triangles. The structural restriction of DMTs in $S_1$ ensures that the NSD-condition holds, and the weak monotony along diagonals directly translates from one object to the other. Note that for each $A \in S_1$ we have that
$\overline{dd}(A)$ is twice the number of diagonally adjacent identical entries in the
corresponding Monotone Triangle, hence $(-1)^{\overline{dd}(A)} = 1$.
\begin{figure}
\begin{center}
\includegraphics[width=8cm]{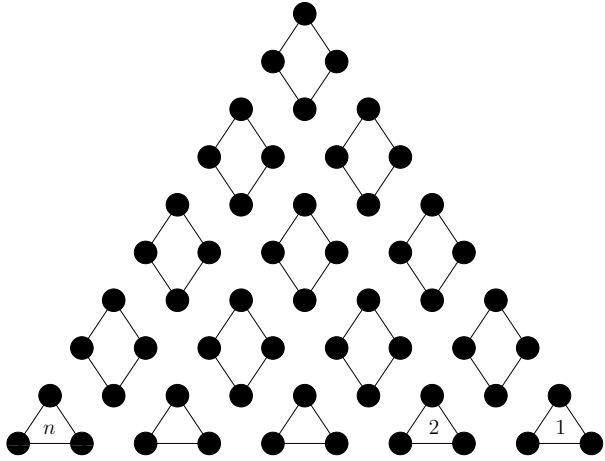}
\caption{Structure-restriction of DMTs such that they correspond to Monotone Triangles: entries connected by an edge must have the same value.} 
\label{dmt-mt-structure}
\end{center}
\end{figure}
Finding a sign-reversing involution on 
$\mathcal{W}_{2n}(n,n,n-1,n-1,\ldots,1,1) \backslash S_1$ remains an open
problem.
 
From the viewpoint of $2$-ASMs the subset $S_1 \subseteq
\mathcal{W}_{2n}(n,n,n-1,n-1,\ldots,1,1)$ corresponds to the set of $2$-ASMs,
where the column generation is restricted to the machine in Figure
\ref{dmtMTCol}. The one-to-one correspondence with Monotone Triangles is even
more obvious from this vantage point. Using this connection might turn out to be useful in finding a
bijection.
\begin{figure}
\begin{center}
\includegraphics[width=8cm]{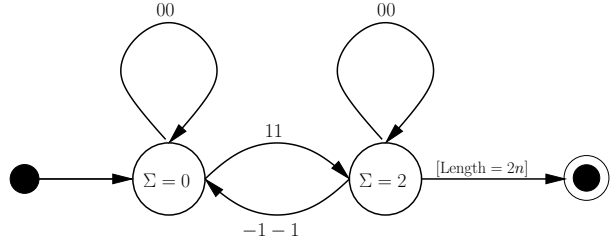}
\caption{Machine generating the columns of those $2$-ASMs corresponding to
$S_1 \subseteq \mathcal{W}_{2n}(n,n,n-1,n-1,\ldots,1,1)$.}
\label{dmtMTCol}
\end{center}
\end{figure}

A second open problem is the following: Considering the shown
equation 
\[
\alpha(n;1,2,\ldots,n) = \alpha(2n;n,n,n-1,n-1,\ldots,1,1),
\] 
it is natural to look for more such nice connections. The generalisation
\[
\alpha(n;k_1,\ldots,k_n) = \alpha(2n;k_n,k_n,\ldots,k_1,k_1)
\] 
to arbitrary strictly increasing sequences $k_1 < k_2 < \cdots < k_n$ does not
hold, for example
\[
\alpha(2;1,4) = 4 \neq 2 = \alpha(4; 4,4,1,1).
\]
Yet, the evaluation $\alpha(n;n,n-1,\ldots,1)$ seems to be of interest. Note
that for an even natural number $n$, the set $\wn(n,n-1,\ldots,1)$ is empty,
and hence Theorem \ref{mainTheorem} implies that \mbox{$\alpha(n;n,n-1,\ldots,1)
= 0$}. For odd numbers $n \geq 3$, the first values of $\alpha(n;
n,n-1,\ldots,1)$ were calculated with Mathematica and are displayed in the
following table:
\begin{center}
\begin{tabular}[h]{|c|c|}
\hline
$\alpha(3;3,2,1)$ & $-1$ \\
\hline
$\alpha(5;5,4,3,2,1)$ & $3$ \\
\hline
$\alpha(7;7,6,5,4,3,2,1)$ & $-26$ \\
\hline
$\alpha(9;9,8,7,6,5,4,3,2,1)$ & $646$ \\
\hline
$\alpha(11;11,10,9,8,7,6,5,4,3,2,1)$ & $-45885$ \\
\hline
$\alpha(13;13,12,11,10,9,8,7,6,5,4,3,2,1)$ & $9304650$ \\
\hline
\end{tabular}
\end{center}
The absolute values of these numbers are known to be the first entries of the
sequence of numbers of Vertically Symmetric Alternating Sign Matrices
(VSASMs). The number of $(2m+1)\times(2m+1)$-VSASM is equal to the number of
Monotone Triangles with prescribed bottom row $(2,4,\ldots,2m)$ leading to
Conjecture \ref{conj}.
\begin{conjecture}
\label{conj}
For $n = 2m+1$, $m\geq 1$, the equation
\begin{equation}
\alpha(n; n,n-1,\ldots,1) = (-1)^{m} \alpha(m;2,4,\ldots,2m)
\end{equation}
seems to hold.
\end{conjecture}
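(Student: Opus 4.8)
The plan is to follow the five-step methodology used to prove Theorem~\ref{wniTheorem}. I first rephrase the statement combinatorially: since $\binom{2m+1}{2}=m(2m+1)\equiv m \pmod 2$, Theorem~\ref{mainTheorem} gives
\[
\alpha(2m+1;2m+1,2m,\ldots,1)=(-1)^{m}\sum_{A\in\mathcal{W}_{2m+1}(2m+1,2m,\ldots,1)}(-1)^{\dd(A)},
\]
so the conjecture is equivalent to the statement that the signed count $\sum_A(-1)^{\dd(A)}$ over decreasing monotone triangles with the \emph{strictly} decreasing bottom row $(2m+1,2m,\ldots,1)$ equals the number $\alpha(m;2,4,\ldots,2m)$ of $(2m+1)$-VSASMs. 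To treat this algebraically I would introduce the refined quantities
\[
V_{m,i}:=\Delta_{k_1}^{i-1}\alpha(2m+1;k_1,2m,2m-1,\ldots,1)|_{k_1=2m+1},\qquad i=1,\ldots,2m+1,
\]
so that $V_{m,1}=\alpha(2m+1;2m+1,2m,\ldots,1)$ is the left-hand side and the higher differences record a refinement; note $V_{m,i}=0$ for $i>2m+1$ by the degree of $\alpha$.

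The first substantive step is a symmetry for $(V_{m,i})_i$ in the spirit of Lemma~\ref{wniSymmetry}. Combining the reflection and shift properties $(2)$ and $(3)$ of $\alpha$ from the proof of Lemma~\ref{wniLES} yields the self-reflection $\alpha(2m+1;k_1,2m,\ldots,1)=\alpha(2m+1;2m+1,2m,\ldots,2,2m+2-k_1)$, which turns a forward difference in $k_1$ into a backward difference in the last argument; together with a reflection of the associated ASM-like matrices (as carried out for $2$-ASMs in Lemma~\ref{wniSymmetry}) this should give a relation $V_{m,i}=(-1)^{\epsilon}V_{m,2m+2-i}$ with an explicit sign $\epsilon$. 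Next, exactly as in Lemma~\ref{wniLES}, I would use the rotation property $(1)$, here $\alpha(2m+1;k_1,\ldots,k_{2m+1})=\alpha(2m+1;k_2,\ldots,k_{2m+1},k_1-(2m+1))$, to convert differences in the cyclically shifted variable back into differences in $k_1$, and then invoke the symmetry to close the system. This presents $(V_{m,i})_{i=1,\ldots,2m+1}$ as an eigenvector, of eigenvalue $1$, of an explicit matrix whose entries are a binomial coefficient times a sign. One then shows, as in Lemma~\ref{eigenspaceDimLemma}, that the corresponding eigenspace is one-dimensional by deleting the appropriate row and column and evaluating the resulting determinant; I expect this to reduce -- after Chu--Vandermonde manipulations and row/column operations -- to a known determinant whose value is, up to sign, the number of $(2m+1)$-VSASMs, in analogy with the appearance of $A_{n-1}$ in Lemma~\ref{eigenspaceDimLemma}.

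With the eigenspace reduced to a line, it remains to exhibit a second vector on it and then to fix the scalar. For the candidate I would take the binomial transform $X_{m,i}:=\sum_{j}\binom{i-1}{j-1}(-1)^{\eta}B_{m,j}$ of the refined VSASM numbers $B_{m,j}$ -- the refinement of $\alpha(m;2,4,\ldots,2m)$ obtained by making one bottom entry a variable and taking differences -- for which a closed product formula is available, and verify as in Lemma~\ref{conjNumbersEVLemma} that $(X_{m,i})_i$ satisfies the same eigenvector equation. This forces $(V_{m,i})_i=C_m(X_{m,i})_i$, and it then suffices to compare a single coordinate. The convenient coordinate is $V_{m,1}$: using Lemma~\ref{sumOpDMT} together with the rotation property $(1)$ exactly as in Lemma~\ref{wniRecursionLemma}, I would derive a recursion for $V_{m,1}$ relating level $m$ to level $m-1$, and then fix $C_m=(-1)^{m}$ by induction on $m$, the base case being the already-verified identity $\alpha(3;3,2,1)=-1=(-1)^{1}\alpha(1;2)$.

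The step I expect to be the main obstacle is the analogue of Lemma~\ref{conjNumbersEVLemma}: the refined VSASM enumeration is considerably more intricate than the refined ASM numbers $A_{n,j}$, so both pinning down the correct product formula for the $B_{m,j}$ and pushing the resulting nested hypergeometric summations through to the eigenvector identity will be delicate. A secondary difficulty lies in the one-dimensionality step, since the determinant that must be evaluated to establish nonvanishing is a less standard VSASM determinant than the descending-plane-partition determinant used in Lemma~\ref{eigenspaceDimLemma}. Should the algebraic route prove unwieldy, the combinatorial reformulation above -- a signed DMT count equal to the unsigned VSASM count -- instead invites a sign-reversing involution on the decreasing monotone triangles carrying a non-trivial $\dd$-contribution, paired with a bijection between the surviving triangles and the monotone triangles counted by $\alpha(m;2,4,\ldots,2m)$, in the spirit of the $S_1,S_2,S_3$ decomposition of Section~\ref{sectionOutlook}.
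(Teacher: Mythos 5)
You are attempting to prove Conjecture~\ref{conj}, which the paper itself leaves open: it is supported only by the Mathematica data for $n\leq 13$, and the authors explicitly pose as an open question whether their five-step methodology can be adapted to it. So there is no paper proof to compare against, and what you have written is a research programme, not a proof. Your preparatory observations are sound: $\binom{2m+1}{2}\equiv m \pmod 2$ does reduce the conjecture via Theorem~\ref{mainTheorem} to a signed DMT count (essentially the reformulation the paper already makes in Section~\ref{sectionOutlook}), and the self-symmetry you want follows cheaply from properties $(1)$--$(3)$ of $\alpha$ in Lemma~\ref{wniLES} -- indeed one gets $\alpha(2m+1;k_1,2m,\ldots,1)=\alpha(2m+1;4m+2-k_1,2m,\ldots,1)$, so your claimed identity is correct. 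But every substantive step after that is asserted in the subjunctive (``should give,'' ``I expect,'' ``I would derive'') rather than carried out.

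Three gaps are concrete. First, the one-dimensionality step requires evaluating a new determinant; you only ``expect'' it to reduce to a VSASM-type determinant, and unlike the descending-plane-partition determinant of \cite{BehrendWeightedEnum} invoked in Lemma~\ref{eigenspaceDimLemma}, no such evaluation is exhibited. Second, the analogue of Lemma~\ref{conjNumbersEVLemma} requires both identifying the correct refined numbers $B_{m,j}$ and pushing the hypergeometric summations through; you name this yourself as the main obstacle and leave it untouched. Third -- and this step would fail as described -- the analogue of Lemma~\ref{wniRecursionLemma}: the paper's recursion for $W_{n,1}$ hinges on the bottom row $(2n-1,n-1,n-1,\ldots,2,2,1)$ forcing the penultimate row to be $(l,n-1,n-1,\ldots,2,2)$ with a \emph{single} free entry $l$, which collapses the signed sum and drops the size by two. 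For the strictly decreasing bottom row $(2m+1,2m,\ldots,1)$ no such collapse occurs: by Lemma~\ref{sumOpDMT} the admissible penultimate rows form a large signed family with many free entries, and rotating via property $(1)$ merely appends $k_1-(2m+1)=0$ and returns a strictly decreasing row of the same length, so no reduction from level $m$ to level $m-1$ presents itself. Until these three steps are supplied, you have reformulated the conjecture, not proved it; the statement remains open, consistent with the paper.
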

It will be interesting to see, whether similar techniques
as presented here can be applied to give an algebraic proof of the conjecture.

\bibliographystyle{alpha}
\bibliography{bib110825}

\begin{thebibliography}{BFZJ11}

\bibitem[BFZJ11]{BehrendWeightedEnum}
R.~E. Behrend, P.~Di Francesco, and P.~Zinn-Justin.
\newblock On the weighted enumeration of alternating sign matrices and
  descending plane partitions.
\newblock {\em arXiv:1103.1176v1}, 2011.

\bibitem[Fis06]{FischerNumberOfMT}
I.~Fischer.
\newblock The number of monotone triangles with prescribed bottom row.
\newblock {\em Adv. Appl. Math., no.2}, 37:249--267, 2006.

\bibitem[Fis07]{FischerNewRefProof}
I.~Fischer.
\newblock A new proof of the refined alternating sign matrix theorem.
\newblock {\em J. Comb. Theory Ser. A}, 114:253--264, 2007.

\bibitem[GKP89]{ConcreteMath}
R.L. Graham, D.E. Knuth, and O.~Patashnik.
\newblock {\em Concrete Mathematics}.
\newblock Addison-Wesley, 1989.

\end{thebibliography}

\end{document}